\newtheorem{theorem}{Theorem}[section]
\newtheorem{lemma}{Lemma}[section]
\newtheorem{definition}{Definition}[section]
\newtheorem{remark}{Remark}[section]
\def\brho{\overline{\rho}}
\def\bh{\overline{h}}
\def\bn{\overline{n}}
\def\p{\partial}
\author{Yaming Zhang 
\and Ning Jiang
\and Jiangyan Liang 
\and Yi-Long Luo
\and Min Tang}
\author{Yaming Zhang \thanks{School of Mathematics, Institute of Natural Sciences and MOE-LSC, Shanghai Jiao Tong Unviersity, Shanghai, China (zymsasj@sjtu.edu.cn, jylmath@sjtu.edu.cn, tangmin@sjtu.edu.cn)}
\and Ning Jiang\thanks{School of Mathematics and Statistics, Wuhan University, Wuhan, China (njiang@whu.edu.cn)}
\and Jiangyan Liang \footnotemark[1]
\and Yi-Long Luo \thanks{School of Mathematics, South China University of Technology, Guangzhou, China (luoylmath@scut.edu.cn)}
\and Min Tang \footnotemark[1]
}
\date{}
\begin{document}
\title{Pattern formation of a pathway-based diffusion model: linear stability analysis and an asymptotic preserving method}

\maketitle

\begin{abstract}
We investigate the linear stability analysis of a pathway-based diffusion model (PBDM), which characterizes the dynamics of the engineered $Escherichia$ $coli$ populations [X. Xue and C. Xue and M. Tang, $PLoS$ $Computational$ $Biology$, 14 (2018), pp. e1006178]. This stability analysis considers small perturbations of the density and chemical concentration around two non-trivial steady states, and the linearized equations are transformed into a generalized eigenvalue problem. By formal analysis, when the internal variable responds to the outside signal fast enough, the PBDM converges to an anisotropic diffusion model, for which the probability density distribution in the internal variable becomes a delta function. We introduce an asymptotic preserving (AP) scheme for the PBDM that converges to a stable limit scheme consistent with the anisotropic diffusion model. Further numerical simulations demonstrate the theoretical results of linear stability analysis, i.e., the pattern formation, and the convergence of the AP scheme.
\end{abstract}
\section{Introduction}\label{sec1}
There are a wide variety of regularly spaced patterns such as vertebrate segments, hair follicles, fish pigmentation, or animal coats \cite{Baker2009Waves, Held1992Models, Marrocco2010Models, Murray2002Mathematical, Painter1999Stripe, Turing1952The, Volkening2015Modelling, Wang2017A}. These patterns are the outcome of coordinated intracellular cell signaling, cell-cell communication, cell growth or cell migration. It is difficult to uncover the essential mechanisms for pattern formation that often are buried in extremely complex physiological contexts. In addition, synthetic biology for  bacteria or simple eukaryotes has been recently used to examine potential strategies for pattern formation \cite{Basu2005A, Khalil2010Synthetic, Mukherji2009Synthetic}.

Cells or organisms can bias their movements in response to extracellular chemical signals. This property is called chemotaxis. It often plays an essential role in innate immunity biofilm-associated infections, embryonic development, tissue maintenance or cancer metastasis \cite{Friedl2009Collective, O'Toole1999The, Pittman2001Chemotaxis, Singh2006Biofilms, Williams2007Helicobacter}. From the macroscopic point of view, continuum models of chemotaxis can characterize the evolution of the cell density by utilizing partial differential equations (PDEs). Among these PDEs models, the classical Patlak–Keller–Segel (PKS) chemotaxis equations have been a popular topic for decades \cite{Keller1970Initiation, Patlak1953Random}. On the other hand, from the microscopic view, run-and-tumble bacteria have been well studied. The intracellular chemotactic  signaling pathway as well as its relation to cell movement were investigated recently \cite{Si2012Pathway,Min2021MODELING, Xue2015Macroscopic}. In order to assemble information of both microscopic and macroscopic levels, in \cite{Erban2004From, Xue2009Multiscale}, the PKS equations were derived from the kinetic models with internal state for bacterial chemotaxis. Using the assumption that the methylation level concentrates locally, the authors proposed a new kinetic system with the turning operator that involves the dynamical intracellular pathway in \cite{Si2014A}. More macroscopic models and their derivations can be found in \cite{Perthame2020Multiple,Sun2017Macroscopic}.

The $Escherichia$ $coli$ chemotaxis signaling pathway was engineered in \cite{Liu2011Sequential}, which involved a quorum sensing module leading to cell motility suppressed by cell-density. The engineered cells in semi-solid agar form periodic stripes of high and low cell densities. Recently in \cite{Xue2018The}, a hybrid particle model that incorporates the intracellular signaling described in \cite{Xue2015Macroscopic} was developed, including a detailed description of intracellular signaling, single cell movement and cell division. This model can be used to illustrate the role of intracellular signaling in stripe formation and explain how the spatial stripe structure depends on cell-level parameters. Furthermore, the authors in \cite{Xue2018The} also derived a pathway-based diffusion model (PBDM) from this hybrid model by using moment closure method, which was consistent with the hybrid model in 1D numerical simulation. 

The PBDM derived in \cite{Xue2018The} reads as following:
\begin{align}\label{sys:kinetic}\tag{PBDM}
\begin{cases}
\partial_{t} \rho^{z} = \nabla_{\mathbf{x}}\cdot(D(z)\nabla_{\mathbf{x}} \rho^{z}) - \kappa \partial_{z} (g(z, h)\rho^{z}) + r n\rho^{z},\\
\partial_{t} h  = D_{h}\Delta_{\mathbf{x}} h +\alpha \varrho - \beta h,\\
\partial_{t} n  = D_{n}\Delta_{\mathbf{x}} n -\gamma \varrho  n,\\
\varrho(\mathbf{x},t) = \int_0^{Z_w}\rho^z\mathrm{d}z,
\end{cases}
\end{align}
with the initial data
\begin{align}\label{eq:initial data}
\rho ^z(\mathbf{x}, z, 0) =\rho^{z,0}(\mathbf{x},z),\ \ h(\mathbf{x}, 0) =h^0(\mathbf{x}),\ \ n(\mathbf{x}, 0) =n^0(\mathbf{x}),
\end{align}
where $\rho^{z,0}(\mathbf{x},z)$, $h^0(\mathbf{x})$, $n^0(\mathbf{x})$ are all non-negative functions. From the biological viewpoint, $\rho^z(\mathbf{x},z,t)$ is the density of engineered $Escherichia$ $coli$ cells at time $t \ge 0$, at position $\mathbf{x} \in \Omega \subseteq \mathbb{R}^N$ ($N \geq 1$) with internal state $z \in [0, Z_w]$, and $\varrho(\mathbf{x},t)$ stands for the total density. Specifically, $z$ is the total concentration of CheZ protein inside cells and $Z_w$ is the maximum amount of Chez protein. The scalar function $h(\mathbf{x}, t)$ is the acyl-homoserine lactone (AHL) concentration at time $t \ge 0$ and at position $\mathbf{x}\in \Omega$. $n(\mathbf{x}, t)$ is the local nutrient concentration at time $t \ge 0$ and at position $\mathbf{x} \in \Omega$. $\kappa > 0$ stands for a scaling parameter, which corresponds to the response speed of intracellular CheZ to the external signal AHL. $r > 0$ represents the growth rate of engineered $Escherichia$ $coli$ cells. The constants $D_h > 0$, $D_n > 0$ are the diffusion coefficients of AHL and the nutrient, respectively. $\alpha > 0$ denotes the production rate of AHL, $\beta > 0$ characterizes the degradation rate of AHL and $\gamma>0$ is the consumption rate of the nutrient.
The diffusion coefficient $D(z)$ is a monotonically increasing non-negative function of $z$ with $D(z)>0$. More specifically, $D(z)$ characterizes the fact that cells with different intracellular CheZ concentration $z$ have different mobility. The dynamics of the total concentration of CheZ protein $z(t)$ is governed by protein production due to transcription and translation as well as dilution due to cell growth. In order to
model this effect, the intracellular dynamic function $g (z,h)$ is given by the following form
\cite{Xue2018The}:
\begin{equation}\label{g(zh)}
  \begin{aligned}
    g (z, h) =k_V\tilde g(z,h)= k_V (L (h) - z) \,,
  \end{aligned}
\end{equation}
where $k_V$ is the volume growth rate, which might be only related to the cell growth rate $r$ ($k_V =r$) or a linear function of the local nutrient concentration $n(\mathbf{x},t)$ ($k_V =r n(\mathbf{x},t)$) \cite{Xue2018The}. The steady state $L(h)$ of the intracellular CheZ is a non-increasing smooth function of $h$, satisfying $L(0) = Z_w$ and $L(\infty) = 0$.
In \cite{Xue2018The}, since AHL suppresses the production of CheZ protein in an ultra-sensitive way, $L(h)$ is chosen to be a step function that is discontinuous at $h=h_0$, where $h_0$ is the threshold AHL level for the suppression of CheZ. In this paper, to simplify the problem, we smooth out the step function and consider the following form
\begin{equation}\label{eq:Lh}
L(h)=Z_w(0.5-0.5\tanh(\mu(h-h_0))),\end{equation} where $Z_w$ is the CheZ level of wild type $Escherichia$ $coli$ cells and $\mu$ gives the sensitivity of CheZ production to the AHL concentration. We impose no-flux boundary conditions of internal state $z$ and space variable $\mathbf{x}$ for any $t>0$:
\begin{equation}\label{BC}
   \begin{aligned}
    &\rho^z|_{z=0^-} =  \rho^z|_{z = Z_w^+} = 0,,\quad \mathbf{x} \in  \Omega\\
    &\nabla_{\mathbf{x}} \rho^{z} \cdot \mathbf{n} =0,\quad \mathbf{x} \in \partial \Omega,\quad z \in [0, Z_w],\\
    &\nabla_{\mathbf{x}} h \cdot \mathbf{n} =\nabla_{\mathbf{x}} n \cdot \mathbf{n} = 0,\quad \mathbf{x} \in \partial \Omega,
   \end{aligned}
\end{equation}
where $\mathbf{n}$ is outward normal vector. 

The engineered $Escherichia$ $coli$ cells secret AHL and AHL suppresses the production of CheZ protein. Thus when the cell density is high, the AHL concentration is high which induces lower intracellular CheZ protein concentration and the number of tumbling cells increases. In other words, cell motility reduces in the place with high cell density. The lower the motility, the harder the cells can escape from high-density regions, which leads to local cell aggregation and formation of sequential stripe patterns as has been observed in the experiment in \cite{Liu2011Sequential}. This phenomenon can be well explained by PBDM and one interesting question is whether there exist other patterns and under which conditions nontrivial patterns can appear.


In \cite{jiang2021kinetic}, by taking the limit $\kappa\to\infty$, i.e. the asymptotic behavior of the fast CheZ turnover rate, we  $formally$ get
$$
\rho(x,t,z)=\varrho(x,t)\delta(L(h(x,t))-z)
$$
and $\varrho(x,t)$ satisfies the anisotropic diffusion model (ADM) with the following form:
\begin{equation}\label{sys:limit}\tag{ADM}
\begin{aligned}
\begin{cases}
\partial _t\varrho(\mathbf{x},t) =\Delta_{\mathbf{x}}\left(D(L(h(\mathbf{x},t)))\varrho(\mathbf{x},t)\right) + r n\varrho(\mathbf{x},t),\\
\partial_{t} h(\mathbf{x}, t) =D_{h}\Delta_{\mathbf{x}} h(\mathbf{x}, t) +\alpha \varrho(\mathbf{x}, t)  -\beta h(\mathbf{x}, t),\\
\partial_{t} n(\mathbf{x}, t) =D_{n}\Delta_{\mathbf{x}} n(\mathbf{x}, t) -\gamma \varrho (\mathbf{x}, t)  n(\mathbf{x} t).
\end{cases}
\end{aligned}
\end{equation}
Compared with (\ref{sys:kinetic}), the cell mobility of (\ref{sys:limit}) is given by $D(L(h(\mathbf{x},t)))$, depending directly on concentration of AHL. The key feature of ADM is the term $\Delta_{\mathbf{x}}\left[D(L(h(\mathbf{x},t)))\varrho(\mathbf{x},t)\right]$ in the equation for $\varrho$, which is called anisotropic diffusion term in \cite{jiang2021kinetic}. Since $$\Delta_{\mathbf{x}}\left(D(L(h))\varrho(\mathbf{x},t)\right)=\nabla_{\mathbf{x}}\left(D(L))\nabla_{\mathbf{x}}\varrho(\mathbf{x},t)\right)+\nabla_{\mathbf{x}}\cdot\left(\nabla_{\mathbf{x}} D(L(h))\varrho(\mathbf{x},t)\right),$$$\Delta_{\mathbf{x}}\left[D(L(h(\mathbf{x},t)))\varrho(\mathbf{x},t)\right]$ includes not only a diffusion term but also an advection term. The anisotropic diffusion term is of particular interest since it appears in other models and applications in the literature. For example, the cross diffusion-reaction limit model of Shigesada-Kawasaki-Teramoto system \cite{DDD19,M06}, the stationary Fokker–Planck–Kolmogorov system \cite{B18}, the Kermack–McKendrick model with nonlocal source terms \cite{BOS22}, the predator-prey model with nonlinear cross-diffusion \cite{CSH21} and so on. 

There exist some analytical works for both PBDM and ADM in the literature. On the one hand the mathematical analysis of PBDM starts from our last paper \cite{jiang2021kinetic}. This system is a highly nonlinear coupling of reaction-diffusion and kinetic-type equations. The first step is to investigate the existence of smooth solutions near constant states, which are closely related to their stability. In \cite{jiang2021kinetic}, the global well-posedness around the trivial state $(0,0,0)$ is proved. On the other hand, \eqref{sys:limit} is of the same type as the  model introduced in \cite{Liu2011Sequential}, where the authors numerically reproduced some key features of experimental observations. Then the mechanism of this patterning process was studied in \cite{FTL12} based on a similar model as in \eqref{sys:limit}, whose asymptotic stability was studied later on in \cite{JSW20,MPW20}. 


The goal of this paper is to investigate the similarity and differences between these two models in terms of linear stability analysis and pattern formation, as well as provide a uniform convergent scheme with respect to the parameter $\kappa$.

Firstly, we investigate the stability of the model \eqref{sys:kinetic} around the non-zero constant states $(\brho^z, \bh, 0)$ and $(0,0,\bn)$. Specifically, we consider the stability of the linearized system \eqref{sys:kinetic} around the above non-trivial steady states with different forms of $k_V$. The main difficulty comes from the z-flux term $\kappa \p_z(g(z,h)\rho^z)$ since $\brho^z$ might be a $\delta$-function $\bar\varrho\delta(z - L(\bar h))$. 
The delta function distribution in $z$ is the main challenge of the analysis and computation. The classical stability analysis fails for the system \eqref{sys:kinetic}. One has to reformulate the equation first based on the structure of the solution. 
To overcome this difficulty, we integrate the cell density $\rho^z$ near the neighborhood of the singularity and obtain the following new system:

\begin{align*}
\begin{cases}
\partial_{t} \varrho_\theta = D(L(\bar h))\Delta_{\mathbf{x}}\varrho_\theta - \kappa \left[g(L^+,h)\rho^{L^+} - g(L^-,h)\rho^{L^-}\right]+r n\varrho_\theta,\\
\partial_{t} \rho^{z} =\nabla_{\mathbf{x}}\cdot(D(z)\nabla_{\mathbf{x}} \rho^{z}) - \kappa \partial_{z} (g(z, h)\rho^{z}) +r n\rho^{z}, \\
\partial_{t} h =D_{h}\Delta_{\mathbf{x}} h +\alpha[\varrho_\theta + \int_{\Omega_{\theta}}\rho^z\mathrm{d}z] -\beta h,\\
\partial_{t} n =D_{n}\Delta_{\mathbf{x}} n -\gamma [\varrho_\theta + \int_{\Omega_{\theta}}\rho^z\mathrm{d}z] n,
\end{cases}
\end{align*}
where $L^{\pm} = L(\bar h)\pm \theta$ with $\theta>0$ being a sufficiently small constant, $z\in \Omega_\theta:= [0,Z_w]/(L^-,L^+)$, $\varrho_\theta(\mathbf{x},t) = \int_{L^-}^{L^+}\rho^z\mathrm{d}z$ is the total density for $z\in[L^-, L^+]\cap [0,Z_w]$.

Then, an asymptotic preserving (AP) scheme that connects the two models \eqref{sys:kinetic} and \eqref{sys:limit} is designed. Due to the analytical difficulties, we only justify formally the limiting process ($\kappa\to\infty$) from the model \eqref{sys:kinetic} to the model \eqref{sys:limit} in \cite{jiang2021kinetic}. But the numerical results on fine meshes in this work indicate the validity of this convergence (see \Cref{fig2}). Then we design an AP scheme that has a uniform convergence order with respect to $\kappa$. A scheme is AP when the asymptotic limit of the discretization  becomes  a stable  solver for the limit model as the scaling parameter $\kappa\to\infty$. When $\kappa$ is large, to guarantee the scheme accuracy, one has to use mesh sizes that are smaller than $1/\kappa$. AP schemes allow for unresolved meshes and provide a general framework for solving the numerical difficulty due to some multiscale parameters \cite{Hu2017AsymptoticPreservingSF, Jin2010ASYMPTOTICP}. The main challenge is the limiting delta distribution in the internal variable. We show that the AP scheme for PBDM converges ($\kappa\to\infty$) to a limit scheme, which is consistent with and a stable discretization for the ADM \eqref{sys:limit}. 


 Several reaction-diffusion-advection type equations have been analyzed due to their wide applications in population dynamics \cite{cantrell2008approximating, jiang2021kinetic,  Ma2021Bifurcation,VidalHenriquez2017Convective, Zhao2015A}. The authors in \cite{VidalHenriquez2017Convective} investigated the convective instability and the local absolute instability of a reaction-diffusion-advection system. In \cite{Ma2021Bifurcation}, the authors considered a two-species reaction-diffusion-advection competition model with the no-flux boundary condition and used the linear stability analysis method to derive the existence as well as local stability of the trivial and semi-trivial steady-state solutions. However, there are very few studies on the stability analysis of the reaction-diffusion model with a convection term depending on the other variable, such as an internal state. Up to our knowledge, there is almost no literature that provides formal stability analysis for solutions with Dirac singularities. The difficulty of designing an AP scheme for PBDM is due to the limiting delta distribution in the internal variable. A similar strategy as the scheme in  \cite{Vauchelet2021Numerical} can be employed to get a limiting delta distribution in $z$. However, using the idea in \cite{Vauchelet2021Numerical}, the $\rho(x,y,z)$'s distribution in $z$ can only be concentrated exactly at the point $z=L(h(x,y))$, which yields an unstable limiting scheme for ADM. In order to obtain a stable discretization for ADM, we propose a new strategy of using several branches of solutions that allow the $z$ distribution of the discretized $\rho(x,y,z)$ to be concentrated at $z=L(h(x',y'))$ with $(x',y')$ being at the neighbourhood of $(x,y)$.


The rest of this paper is organized as follows: in the next section, we investigate the linear stability for the model \eqref{sys:kinetic} around the two non-trivial steady states $(\brho, \bh, 0)$ and $(0, 0, \bn)$. The results depend on the types of steady state and we discuss the corresponding stability/instability of the model \eqref{sys:kinetic}. Since the different forms of the volume growth rate $k_V$ affect the stability result, our analysis is divided into two cases: $k_V = r$  and $k_V = r n(\mathbf{x},t)$. In \Cref{sec3}, we design an asymptotic preserving scheme of model \eqref{sys:kinetic} with a scaling $\kappa\to\infty$. By using this scaling, an anisotropic diffusion model is $formally$ derived from model \eqref{sys:kinetic}. In \Cref{sec4}, some numerical simulations are presented to verify our analytical results and show several interesting patterns, like the circular sector. Finally, we make some discussion on the results of this paper and some possible future work.


\section{Linear stability analysis}\label{sec2}
We first determine the non-trivial steady solution $(\bar\rho^z,\bar h,\bar n)$ that satisfies
  \begin{align}\label{eq:constraint}
    \begin{cases}
     \Delta _{\mathbf{x}}\bar \rho^z =0,\quad\Delta _{\mathbf{x}}\bar h =0,\quad \Delta _{\mathbf{x}}\bar n =0,\\
      r\bar n\bar\rho^z=0,\quad \alpha \bar\varrho -\beta\bar h =0,\quad \gamma \bar \varrho \bar n =0,\\
      \partial_z(g(z,\bar h)\bar \rho^z) =0.
    \end{cases}
  \end{align}
There is always a trivial uniform steady state $(0,0,0)$ which was already studied in \cite{jiang2021kinetic}. In this paper, we focus on two types of non-zero steady states $(\bar\rho^z, \bar h, 0)$ and $(0, 0, \bar n)$. The main difficulty comes from that $\bar\rho^z$ depends on the internal state $z$. This is different with the classical reaction-diffusion-advection system.

Since the volume growth rate $k_V$ can depend on the cell growth rate $r$ ($k_V =r$) or be a linear function of the nutrient $n(\mathbf{x},t)$, 
we consider the following two cases:

\begin{itemize}
  \item [(1)]As $k_V = r$, $\bar\rho^z$ is a $\delta\mbox{-}$function in $z$. According to the constraints in \eqref{eq:constraint}, we obtain
    \[\partial_z\left(g(z,\bar h)\bar\rho^z\right) = 0\quad \Rightarrow \quad  r\partial_z\left((L(\bar h) - z)\bar\rho^z\right) = 0 \quad \Rightarrow \quad (L(\bar h) - z)\bar\rho^z = C,\]
where the constant $C$ is independent of $z$. If $\bar\rho^z$ is a continuous function of $z$, then  $\bar\rho^z = \frac{C}{L(\bar h)-z}$. due to $L(\bar h) \in [0,Z_w]$, we can obtain $\bar\varrho = \int^{Z_w}_0\bar\rho^z\mathrm{d}z = \infty$ , which contradicts the biological background. Then $\bar\rho^z$ is a $\delta\mbox{-}$function, i.e.,
\[\bar\rho^z = \bar\varrho\delta(z - L(\bar h)).\]
  \item [(2)] As $k_V = rn(x,t)$, we deduce that
  \begin{align}\label{rho-rn}
    \p_z\left(g(z,\bar h)\bar\rho^z\right) = r\bar n\partial_z\left((L(\bh) - z)\bar\rho^z\right) = 0.
  \end{align}
The distribution of $\bar\rho^z$ with respect to $z$ can not be determined since $\bar n$ is zero in the steady state $(\brho^z, \bh, 0)$. So we only consider that the state $\brho^z$ is a continuous function in $z$ and $\brho^z = \bar\varrho\delta(z - L(\bar h))$ in the subsequent part.
\end{itemize}

The classical way of formal linear stability analysis is to linearize the model \eqref{sys:kinetic} around the steady state $(\bar\rho^z,\bar h,\bar n)$ by introducing
\begin{equation*}
\begin{aligned}
&\rho^z =\bar\rho ^z+\delta \rho ^z,\quad h =\bar h +\delta h, \quad n=\bar n+\delta n.\\
\end{aligned}
\end{equation*}
Assume that the perturbations $(\delta\rho^z,\delta h,\delta n)$ are small and can be represented by harmonic waves as follows:
\begin{equation*}
\begin{aligned}
    \left(
  \begin{array}{l}
  \delta \rho^z\\
  \delta h\\
  \delta n
  \end{array}
  \right)= \sum_{\mathbf{k}}
      \left(
  \begin{array}{l}
           C_1^{\mathbf{k}} e^{i k_3z}\\
           C_2^{\mathbf{k}}\\
           C_3^{\mathbf{k}}
  \end{array}
  \right)e^{\lambda t + i k_1 x + i k_2 y},
\end{aligned}
\end{equation*}
where $\mathbf{k} = (k_1,k_2,k_3)^T \in \mathbb{R}^3$ and $k_1$, $k_2$, $k_3$ are the corresponding frequencies. 
Then the linearized system of \eqref{sys:kinetic} can be expressed in a matrix form
     \begin{equation}\label{de-eigenvalue problem}
    A \left\{\begin{array}{l}
     \delta \rho ^z \\
     \delta h \\
     \delta n
    \end{array}\right\} = \lambda
    \left\{\begin{array}{c}
     \delta \rho ^z \\
     \delta h \\
     \delta n
    \end{array}\right\},
    \end{equation}
where $A$ is the matrix of model \eqref{sys:kinetic} around the steady state and $\lambda$ is the corresponding eigenvalue. The eigenvalue problem \eqref{de-eigenvalue problem}
%
%
determines the linear stability of PBDM around the steady state as follows:
\begin{definition}
The model \eqref{sys:kinetic} is called stable at the steady state $(\bar\rho^z,\bar h,\bar n)$ if there exists a constant $c_0 > 0$ such that for all eigenvalues $\lambda$, we have $Re(\lambda) \leq - c_0$. Otherwise, it is unstable if there exists an eigenvalue $\lambda$ with $Re(\lambda) > 0$.
\end{definition}
The above approach can be applied to ADM (see \Cref{SM1} in the Appendix) but not to PBDM. When we consider the linearization of $g(z,h)$ around the steady state $(\bar\rho^z,\bar h, 0)$ with $k_V = r$, the advection term can be approximated by
    \begin{equation*}
        \begin{aligned}
        \kappa\partial_z\left(g(z,h)\rho^z\right) 
        = \kappa  \partial_z\left[g(z,\bar h)\delta \rho^z\right] + \kappa \partial_z\left[\partial_h g(z,\bh)\bar \rho^z\right]\delta h,
        \end{aligned}
    \end{equation*}
where the higher order terms are ignored. Note that $\partial_h g(z,\bar h)$ near $0$ as $\bh$ is away from $h_0$ but becomes nonzero as $\bar h$ approaches $h_0$, while $\bar \rho^z$ has a delta distribution in $z$. Then it is hard to control the term $\partial_z\left[\partial_h g(z,\bh)\bar \rho^z\right]$, which indicates that the perturbation of $\kappa \partial _z(g(z,h)\rho^z)$ near $(\brho^z, \bh, 0)$ is the main difficulty.

\subsection{Linearizations}\label{subsec:Preliminaries}

 In order to solve this problem, we define
\begin{equation*}
    \begin{aligned}
    &L^{\pm} : = L(\bar h) \pm \theta, \quad \Omega_\theta := [0,Z_w]/(L^-,L^+),\quad \varrho_\theta(\mathbf{x},t) = \int_{L^-}^{L^+}\rho^z\mathrm{d}z,
    \end{aligned}
\end{equation*}
where $\theta > 0$ is a small constant.

Integrating the $\rho^z$-equation of model \eqref{sys:kinetic} in $z$ over $(L^-,L^+)$, we derive that
\begin{equation*}
\begin{aligned}
\partial_{t} \varrho_\theta &=\Delta_{\mathbf{x}}\int_{L^-}^{L^+}D(z)\rho^z\mathbf{d}z - \kappa \left[g(L^+,h)\rho^{L^+} - g(L^-,h)\rho^{L^-}\right]+r n(\mathbf{x}, t)\varrho_\theta,\\
&\approx D(L(\bar h))\Delta_{\mathbf{x}}\varrho_\theta - \kappa \left[g(L^+,h)\rho^{L^+} - g(L^-,h)\rho^{L^-}\right]+r n(\mathbf{x}, t)\varrho_\theta,\\
\end{aligned}
\end{equation*}
where $\rho^{L^{\pm}} = \rho^z|_{z = L^{\pm}}$. The approximation is valid since $\theta$ is small enough and $D(z)$ is continuous. Then model \eqref{sys:kinetic} can be rewritten into the following form:
\begin{align}\label{a14}
\begin{cases}
\partial_{t} \varrho_\theta = D(L(\bar h))\Delta_{\mathbf{x}}\varrho_\theta - \kappa \left[g(L^+,h)\rho^{L^+} - g(L^-,h)\rho^{L^-}\right]+r n\varrho_\theta,\\
\partial_{t} \rho^{z} =\nabla_{\mathbf{x}}\cdot(D(z)\nabla_{\mathbf{x}} \rho^{z}) - \kappa \partial_{z} (g(z, h)\rho^{z}) +r n\rho^{z}, \\
\partial_{t} h =D_{h}\Delta_{\mathbf{x}} h +\alpha[\varrho_\theta + \int_{\Omega_{\theta}}\rho^z\mathrm{d}z] -\beta h,\\
\partial_{t} n =D_{n}\Delta_{\mathbf{x}} n -\gamma [\varrho_\theta + \int_{\Omega_{\theta}}\rho^z\mathrm{d}z] n,
\end{cases}
\end{align}
where $ z\in \Omega_\theta$ in the second equation. From the definition of $\varrho_\theta$, we know that the boundary condition for $\varrho_\theta$ is the same as in \eqref{BC} and its initial data satisfy
\[\varrho_\theta^0 = \int_{L-}^{L^+}\rho^0(\mathbf{x},z)\mathbf{d}z.\]
We can obtain the stability of model \eqref{sys:kinetic} from the stability of system (\ref{a14}). The two steady states of \eqref{a14} are $(\bar\varrho_{\theta},\bar\rho^z,\bh,0)$ and $(0,0,0,\bn)$ with $\bar\varrho_\theta = \int_{L^-}^{L^+}\bar\rho^z\mathrm{d}z$.
Then we linearize \eqref{a14} around the steady state $(\bar\varrho_{\theta},\bar\rho^z,\bar h,\bar n)$. Let the solution $(\varrho_\theta, \rho^z, h, n)$ be rewritten into the following perturbation form:
\begin{equation}\label{a15}
    \begin{aligned}
    &\varrho_\theta = \bar\varrho_\theta + \delta \varrho_\theta,\quad \rho^z =\bar\rho ^z+\delta \rho ^z\ (z\in\Omega_{\theta}),\quad h =\bar h +\delta h, \quad n=\bar n+\delta n,\\
    \end{aligned}
\end{equation}
where the perturbation  $|\delta \bm{\phi} |\ll 1$ with $\bm{\phi} = (\varrho_{\theta},\rho^z, h,n)^T $. The initial condition is given by
  \begin{align*}
     (\varrho_{\theta}^0(\mathbf{x}),\,  \rho^0(\mathbf{x},z),\, h^0(\mathbf{x}),\, n^0(\mathbf{x})) = (\bar\varrho_{\theta}^0 + \delta\varrho_{\theta},\, \brho^{z,0} + \delta \rho^z,\, \bh^0 + \delta h,\, \bn^0 + \delta n).
  \end{align*}

Using \eqref{eq:constraint} and the perturbation form in \eqref{a15}, \eqref{a14} gives the following perturbation system:
\begin{small}
\begin{equation}\label{sys:linearization}
\begin{aligned}
\begin{cases}
\partial_t\delta\varrho_\theta =& D(L(\bh))\Delta_{\mathbf{x}}\delta \varrho_\theta - \kappa k_V \left[\tilde g(L^+,\bh)\delta\rho^{L^+} - \tilde g(L^-,\bh)\delta \rho^{L^-}\right] \\
&-\kappa k_V\left[\brho^{L^+} - \brho^{L^-}\right] \partial_h\tilde g(z,\bh)\delta h + r \bar \varrho_{\theta}\delta n + r\bar n\delta\varrho_{\theta},\\
\partial_{t} \delta \rho ^z =& D(z)\Delta _{\mathbf{x}}\delta\rho^z - \kappa k_V \partial_z(\tilde g(z,\bar h)\delta \rho^z) - \kappa k_V \partial_z\bar\rho^z\partial_h\tilde g(z,\bh)\delta h +r\bar \rho ^z \delta n + r\bar n\delta\rho^z\\
\partial_{t} \delta h  =& D_{h}\Delta _{\mathbf{x}}\delta h +\alpha\delta\varrho_\theta + \alpha \int_{\Omega_{\theta}}\delta \rho^z\mathrm{d}z  -\beta \delta h,\\
\partial_{t} \delta n = & D_{n}\Delta _{\mathbf{x}}\delta n -\gamma [\bar\varrho_{\theta} + \int_{\Omega_{\theta}}\bar\rho^z\mathrm{d}z]\delta n -\gamma \bn\delta \varrho_{\theta} -\gamma \bn\delta \rho^z,\\
\end{cases}
\end{aligned}
\end{equation}
\end{small}
with $z\in\Omega_{\theta}$ and $\partial_z\tilde g(z,\bar h) = -1$. Note that $\partial_h\tilde g(z,\bh) = \partial_h L(h)|_{h = \bh}$ is independent of $z$. Assume the perturbation $(\delta\varrho_{\theta},\delta\rho^z,\delta h,\delta n)$ has the following form of wave
\begin{equation}\label{perturbation1}
    \begin{aligned}
     \left(
  \begin{array}{l}
  \delta \varrho_\theta\\
  \delta \rho^z\\
  \delta h\\
  \delta n
  \end{array}
  \right)= \sum_{{\mathbf{k}}}
      \left(
  \begin{array}{l}
           C_0^{\mathbf{k}}\\
           C_1^{\mathbf{k}} e^{i k_3z}\\
           C_2^{\mathbf{k}}\\
           C_3^{\mathbf{k}}
  \end{array}
  \right)e^{\lambda t + i k_1 x + i k_2 y},
    \end{aligned}
\end{equation}
where ${\mathbf{k}} = (k_1,k_2,k_3)^T\in \mathbb{R}^3$ and $k_1$, $k_2$, $k_3$ are corresponding frequencies. Inserting the perturbation \eqref{perturbation1} into the system \eqref{sys:linearization}, the linearized system can be expressed in a matrix form
     \begin{equation}\label{eigenvalue problem}
    A \left\{\begin{array}{l}
     C_{0}^{\mathbf{k}}\\
     C_{1}^{\mathbf{k}}\\
     C_{2}^{\mathbf{k}}\\
     C_{3}^{\mathbf{k}}
    \end{array}\right\} = \lambda
    \left\{\begin{array}{c}
     C_{0}^{\mathbf{k}}\\
     C_{1}^{\mathbf{k}}\\
     C_{2}^{\mathbf{k}}\\
     C_{3}^{\mathbf{k}}
    \end{array}\right\},
    \end{equation}
where $A$ is the linearized matrix operator of \eqref{sys:linearization}. For the two different steady states and the different forms of $k_V$, we respectively derive the corresponding matrix $A$ and determine the stability according to the signs of real parts of their eigenvalues.

We have the following theorem:
\begin{theorem}[Linear stability analysis]\label{Thm:linear stability}
 Let $\Omega\subset \mathbf{R}^2$ be a bounded domain, the stability of the model \eqref{sys:kinetic} is related to the cell volume growth rate $k_V$.  
 
\begin{itemize}
  \begin{item} $\bm{Case A:}$ For $k_V = r$, the model \eqref{sys:kinetic} is unstable at both steady states $(\brho^z, \bh, 0)$ and $(0,0,\bar n)$.
  \end{item}  

  \begin{item} $\bm{Case B:}$ For $k_V = r n(\mathbf{x},t)$, the model \eqref{sys:kinetic} is stable at $(\bar\rho^z,\bar h,0)$, but is unstable at $(0,0,\bar n)$.
  \end{item}
\end{itemize}
\end{theorem}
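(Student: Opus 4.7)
The plan is to split into four sub-cases by pairing each of the two steady states with each of the two forms of $k_V$, specialize the linearized system \eqref{sys:linearization} in each sub-case, reduce to the eigenvalue problem \eqref{eigenvalue problem}, and decide stability from the sign of $\mathrm{Re}(\lambda)$. The preparatory observation is that each steady state produces crucial structural cancellations. At $(\brho^z,\bh,0)$ with $\brho^z=\bar\varrho\,\delta(z-L(\bh))$, the identities $\brho^{L^\pm}=0$ and $\brho^z\equiv 0$ on $\Omega_\theta$ eliminate the $\kappa k_V[\brho^{L^+}-\brho^{L^-}]\partial_h\tilde g\,\delta h$ and $\kappa k_V\partial_z\brho^z\,\partial_h\tilde g\,\delta h$ terms, while $\bn=0$ kills the growth sources. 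At $(0,0,\bn)$, vanishing $\brho^z$ removes every source proportional to $\brho^z$ but leaves the \emph{active} growth terms $r\bn\,\delta\rho^z$ and $r\bn\,\delta\varrho_\theta$. Finally, when $k_V=rn$ is evaluated at a steady state with $\bn=0$, the factor $\kappa k_V=\kappa r(\bn+\delta n)=\kappa r\,\delta n$ is itself a perturbation, so the $\kappa$-scaled advection enters the linearization only through the single coupling $\kappa r\,\tilde g(z,\bh)\brho^z\,\delta n$; this is precisely the mechanism that separates Case B from Case A at $(\brho^z,\bh,0)$.

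Next I would insert the wave ansatz \eqref{perturbation1}: the $\delta\rho^z$-equation on $\Omega_\theta$ becomes a first-order linear ODE in $z$ with coefficients depending on $|\mathbf{k}|^2$ and $\lambda$, which can be solved explicitly by an integrating factor on each of $[0,L^-]$ and $[L^+,Z_w]$ using the no-flux BCs at $z=0,Z_w$. The resulting closed forms for the traces $\delta\rho^{L^\pm}$ and for $\int_{\Omega_\theta}\delta\rho^z\,\d z$ are linear functionals of the amplitudes $C_0^{\mathbf{k}}$, $C_2^{\mathbf{k}}$, $C_3^{\mathbf{k}}$; substituting back and sending $\theta\to 0^+$ closes the system to a matrix $A(\mathbf{k},\lambda)$ whose characteristic equation $\det(A(\mathbf{k},\lambda)-\lambda I)=0$ gives the admissible eigenvalues.

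The three instability claims and the single stability claim are then treated separately. At $(0,0,\bn)$ in both Cases A and B, integrating the $\delta\rho^z$-equation over $[0,Z_w]$ kills the $\kappa$-divergence via the no-flux BC, leaving $\partial_t\delta\varrho=\Delta_{\mathbf{x}}\!\int_0^{Z_w}\!D(z)\delta\rho^z\,\d z+r\bn\,\delta\varrho$; on the spatially homogeneous mode $\mathbf{k}=0$ this yields the explicit eigenvalue $\lambda=r\bn>0$, so both steady states are unstable. At $(\brho^z,\bh,0)$ with $k_V=r$, the $\kappa$-scaled advection remains active and the chemotactic feedback chain $\varrho_\theta\to h\to L(h)\to$ mobility is unbroken; I would exhibit a band of wavenumbers $|\mathbf{k}|$ for which $\det(A(\mathbf{k},\lambda)-\lambda I)$ has a real positive root, reproducing the classical Keller--Segel pattern-forming instability. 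For the stable sub-case, Case B at $(\brho^z,\bh,0)$, the cancellations noted above reduce the $\delta\rho^z$-equation on $\Omega_\theta$ to pure diffusion and leave a damped $3\times 3$ reaction-diffusion block in $(\delta\varrho_\theta,\delta h,\delta n)$ with $-\beta$ and $-\gamma\bar\varrho_\theta$ on the diagonal; a Routh--Hurwitz check on its characteristic polynomial yields the uniform bound $\mathrm{Re}(\lambda)\le -c_0$.

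The hard part will be the middle step at $(\brho^z,\bh,0)$ with $k_V=r$: since $D(z)$ and $\tilde g(z,\bh)=L(\bh)-z$ both depend explicitly on $z$, the separable ansatz $C_1^{\mathbf{k}}e^{ik_3z}$ in \eqref{perturbation1} is only a formal book-keeping device, and one must actually solve the $z$-ODE by quadrature. Carefully controlling the boundary traces $\delta\rho^{L^\pm}$ in the limit $\theta\to 0^+$ --- the mechanism through which the Dirac singularity of $\brho^z$ re-enters the finite-dimensional reduction --- is the delicate technical point that legitimizes the reduction and ultimately the whole stability characterization.
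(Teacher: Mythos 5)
Your treatment of three of the four sub-cases is consistent with what the paper does: at $(0,0,\bn)$ (both Cases A2 and B2) the growth term produces an eigenvalue with real part $r\bn-KD>0$ for small $K$, and at $(\brho^z,\bh,0)$ with $k_V=rn$ the factor $\kappa k_V=\kappa r\,\delta n$ is itself first order, so the $\kappa$-advection drops out of the linearization and only damped diffusive eigenvalues remain. Your idea of solving the $z$-ODE by quadrature instead of using the formal $e^{ik_3z}$ ansatz is a legitimate (indeed more careful) variant of the paper's reduction on $\Omega_\theta$.

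The genuine gap is in the decisive sub-case, instability at $(\brho^z,\bh,0)$ with $k_V=r$. You propose to ``reproduce the classical Keller--Segel pattern-forming instability'' by exhibiting a band of wavenumbers for which $\det(A(\mathbf{k},\lambda)-\lambda I)$ has a positive root, on the grounds that the feedback chain $\varrho_\theta\to h\to L(h)\to$ mobility is ``unbroken.'' In the linearized system \eqref{sys:linearization}/\eqref{sys:perturbation1} that feedback is in fact broken at this steady state: $\delta h$ enters the density equations only through the coefficients $\kappa r[\brho^{L^+}-\brho^{L^-}]\partial_h\tilde g$ and $\kappa r\,\partial_z\brho^z\,\partial_h\tilde g$ on $\Omega_\theta$, and both vanish because $\brho^z=\bar\varrho\,\delta(z-L(\bh))$ is supported strictly inside $(L^-,L^+)$, so its traces at $L^\pm$ and its restriction to $\Omega_\theta$ are zero (this is exactly the paper's observation $a_{13}=0$). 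Consequently the determinant you plan to analyze factorizes, no KS-type positive root arises from the $h$-coupling, and your proposed mechanism cannot deliver the instability. What actually destabilizes the system is the $z$-compression in the advection term: since $\partial_z\tilde g(z,\bh)=-1$, the term $-\kappa r\,\partial_z(\tilde g\,\delta\rho^z)$ contributes $+\kappa r\,\delta\rho^z$, giving the eigenvalue $\lambda=-KD(z)+\kappa r-ik_3\kappa r\tilde g(z,\bh)$ with $\mathrm{Re}\,\lambda=\kappa r-KD(z)>0$ for $K$ small --- a decoupled, purely $z$-driven growth, not a Turing/chemotaxis band. Since you never actually exhibit the positive root and the mechanism you invoke is absent at linear order, the central claim of Case A remains unproven in your plan; the fix is simply to read off the spectrum of the decoupled $\delta\rho^z$-operator on $\Omega_\theta$ (or of $a_{22}$ in the characteristic matrix) and observe the $+\kappa r$ term, as the paper does.
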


\subsection{The stability analysis when $k_V = r$}\label{subsec: k_V=r}
In this subsection, our aim is to investigate the stability of model \eqref{sys:kinetic} with $k_V = r$. 
Then \eqref{sys:linearization} becomes
\begin{equation}\label{sys:perturbation1}
\begin{aligned}
\begin{cases}
\partial_t\delta\varrho_\theta =& D(L(\bh))\Delta_{\mathbf{x}}\delta \varrho_\theta
 - \kappa r \left[
  \tilde g(L^+,\bh)\delta\rho^{L^+}
 - \tilde g(L^-,\bh)\delta \rho^{L^-}
 \right]\\
&- \kappa r\left[\brho^{L^+} - \brho^{L^-}\right] \partial_h\tilde g(z,\bh)\delta h + r\bar n\delta\varrho_{\theta}+ r \bar \varrho_{\theta}\delta n\\
\partial_{t} \delta \rho ^z =& D(z)\Delta _{\mathbf{x}}\delta\rho^z - \kappa r \partial_z(\tilde g(z,\bar h)\delta \rho^z) - \kappa r \partial_z\bar\rho^z\partial_h\tilde g(z,\bh)\delta h + r\bar n\delta\rho^z+r\bar \rho ^z \delta n\\
\partial_{t} \delta h  =& D_{h}\Delta _{\mathbf{x}}\delta h +\alpha\delta\varrho_\theta + \alpha \int_{\Omega_{\theta}}\delta \rho^z\mathrm{d}z  -\beta \delta h,\\
\partial_{t} \delta n  =& D_{n}\Delta _{\mathbf{x}}\delta n  -\gamma [\bar\varrho_{\theta} + \int_{\Omega_{\theta}}\bar\rho^z\mathrm{d}z]\delta n -\gamma \bn\delta \varrho_{\theta} - \gamma \bn\int_{\Omega_{\theta}}\delta \rho^z\mathrm{d}z,\\
\end{cases}
\end{aligned}
\end{equation}
where $z\in\Omega_\theta$ and $\partial_z\tilde g(z,\bar h) = -1$. Substituting the perturbation \eqref{perturbation1} into the perturbation system \eqref{sys:perturbation1} yields the characteristic matrix as follows:
\begin{equation*}
\begin{aligned}
    \left(
  \begin{array}{cccc}
          -K D(L(\bh)) + r\bn - \lambda &a_{12} & a_{13} &r\bar \varrho_{\theta}  \\
          0& a_{22} - \lambda& \kappa r\partial_z\brho^z\partial_h\tilde g(z,\bh) &r\brho^z\\
          \alpha & \alpha\int_{\Omega_{\theta}}e^{i k_3 z}\mathrm{d}z & -K D_h - \beta - \lambda &0\\
          -\gamma\bn &-\gamma\bn\int_{\Omega_{\theta}}e^{i k_3z}\mathrm{d}z&0& a_{44} -\lambda
  \end{array}
  \right),
\end{aligned}
\end{equation*}
where $K = k_1^2 + k_2^2$ and
\begin{equation*}
\begin{aligned}
a_{12} &= -\kappa r \left[\tilde g(L^+,\bh)e^{i k_3 L^+} - \tilde g(L^-,\bh)e^{i k_3 L^-}\right],\quad a_{13} = \kappa r\left[\brho^{L^+} - \brho^{L^-}\right] \partial_h\tilde g(z,\bh),\\ a_{22} &= -K  D(z) + \kappa r + r\bn - i k_3\kappa r \tilde g(z,\bh),\quad a_{44}=-K D_n - \gamma[\bar\varrho_{\theta} + \int_{\Omega_{\theta}}\bar\rho^z\mathrm{d}z].
\end{aligned}
\end{equation*}
\begin{itemize}
\item[$\bm{ A1:}$] \textbf{The steady state $(\bar\rho^z, \bar h, 0)$. }The corresponding steady state of \eqref{a14} is $(\bar\varrho_{\theta},\brho^z,\bh,0)$. Since $\brho^z = 0$ in $\Omega_{\theta}$, we have $a_{13} = 0$. Then eigenvalues are
\begin{align*}
&\lambda_{1}^1 = -K D(L(\bh)),\quad \lambda_{1}^2 = -K D(z) + \kappa r - i k_3\kappa r \tilde g(z,\bh),\\
&\lambda_{1}^3 = -K D_h - \beta,\quad\,\,\, \lambda_{1}^4 = -K D_n - \gamma\bar\varrho.
\end{align*}

\item[$\bm{ A2:}$] \textbf{The steady state $(0, 0, \bar n)$. }The corresponding steady state of \eqref{a14} is $(0,0,0,\bar n)$. The corresponding eigenvalues are
\begin{align*}
\lambda_{2}^1 &= -K D(L(\bh)) + r\bn,\quad \lambda_{2}^2 = -K  D(z) + \kappa r + r\bn - i k_3\kappa r \tilde g(z,\bh),\\
\lambda_{2}^3 &= -K D_h - \beta,\quad\quad\quad\,\,\,\, \lambda_{2}^4 = -K D_n.
\end{align*}
\end{itemize}
Observe that the real parts of $\lambda_{1}^2$ , $\lambda_{2}^1$ and $\lambda_{2}^2$ are positive if $K$ is sufficiently small. Therefore, it is concluded that, at both $(\bar\varrho_{\theta},\brho^z,\bar h,0)$ and $(0,0,0,\bar n)$, the linearized systems \eqref{sys:perturbation1} are unstable, i.e. \textbf{CaseA} of \Cref{Thm:linear stability} is proved.

\subsection{The stability analysis when $k_V = r n(\mathbf{x},t)$}\label{subsec: k_V=rn}
Similarly, the perturbation system of model \eqref{sys:kinetic} with $k_V = r n(\mathbf{x},t)$ is
\begin{equation}\label{sys:perturbation2}
\begin{aligned}
\begin{cases}
\partial_t\delta\varrho_\theta =& D(L(\bh))\Delta_{\mathbf{x}}\delta \varrho_\theta - \kappa r\bn \left[\tilde g(L^+,\bh)\delta\rho^{L^+} - \tilde g(L^-,\bh)\delta \rho^{L^-}\right] \\
&-\kappa r\bar n\left[\brho^{L^+} - \brho^{L^-}\right] \partial_h\tilde g(z,\bh)\delta h -\kappa r\left[\tilde g(L^+,\bh)\bar\rho^{L^+} - \tilde g(L^-,\bh)\bar \rho^{L^-}\right]\delta n
\\
&+ r\bar n\delta\varrho_{\theta} + r \bar \varrho_{\theta}\delta n,\\
\partial_{t} \delta \rho ^z =& D(z)\Delta _{\mathbf{x}}\delta\rho^z - \kappa r \bn\partial_z(\tilde g(z,\bar h)\delta \rho^z) - \kappa r\bn \partial_z\bar\rho^z\partial_h\tilde g(z,\bh)\delta h + r\bar n\delta\rho^z+r\bar \rho ^z \delta n \\
&- r\partial_z(g(z,\bh)\brho^z)\delta n,\\
\partial_{t} \delta h =&D_{h}\Delta _{\mathbf{x}}\delta h +\alpha\delta\varrho_\theta + \alpha \int_{\Omega_{\theta}}\delta \rho^z\mathrm{d}z  -\beta \delta h,\\
\partial_{t} \delta n =&D_{n}\Delta _{\mathbf{x}}\delta n -\gamma [\bar\varrho_{\theta} + \int_{\Omega_{\theta}}\bar\rho^z\mathrm{d}z]\delta n -\gamma \bn\delta \varrho_{\theta} -\gamma \bn\int_{\Omega_{\theta}}\delta \rho^z\mathrm{d}z,\\
\end{cases}
\end{aligned}
\end{equation}
where $z\in\Omega_\theta$ and $\partial_z\tilde g(z,\bar h) = -1$. Substituting the perturbation \eqref{perturbation1} into the system \eqref{sys:perturbation2} yields the characteristic matrix as follows:
\begin{equation*}
\begin{aligned}
    \left(
  \begin{array}{cccc}
          -K D(L(\bh)) + r\bn - \lambda &a_{12} & a_{13} &a_{14}  \\
          0& a_{22} - \lambda& \kappa r\bn\partial_z\brho^z\partial_h\tilde g(z,\bh) &a_{24}\\
          \alpha & \alpha\int_{\Omega_{\theta}}e^{i k_3 z}\mathrm{d}z & -K D_h - \beta - \lambda &0\\
          -\gamma\bn &-\gamma\bn\int_{\Omega_{\theta}}e^{i k_3z}\mathrm{d}z&0&a_{44} -\lambda
  \end{array}
  \right),
\end{aligned}
\end{equation*}
where $K = k_1^2 + k_2^2$ and
\begin{equation*}
\begin{aligned}
a_{12} &= -\kappa r \bn \left[\tilde g(L^+,\bh)e^{i k_3 L^+} - g(L^-,\bh)e^{i k_3 L^-}\right],\quad a_{13} = \kappa r\bar n\left[\brho^{L^+} - \brho^{L^-}\right] \partial_h\tilde g(z,\bh),\\
a_{14} & = r\bar \varrho_{\theta} - \kappa r\left[\tilde g(L^+,\bh)\bar\rho^{L^+} - \tilde g(L^-,\bh)\bar \rho^{L^-}\right],
\quad a_{24} = r\brho^z-\kappa r\partial_z(g(z,\bh)\brho^z),\\
a_{22} &= -K  D(z) + \kappa r\bn + r\bn - i k_3\bn \kappa r \tilde g(z,\bh),\quad a_{44}=-K D_n - \gamma[\bar\varrho_{\theta} + \int_{\Omega_{\theta}}\bar\rho^z\mathrm{d}z],\quad 
\end{aligned}
\end{equation*}

\begin{itemize}
\item[$\bm{B1:}$] \textbf{The steady state $(\bar\rho^z, \bar h, 0)$.} The corresponding steady state of \eqref{a14} is $(\bar\varrho_{\theta},\brho^z,\bar h,0)$. We obtain the following eigenvalues:
\begin{align*}
\lambda_{3}^1 = -K D(L(\bh)),\,\, \lambda_{3}^2 = -K  D(z),\,\, \lambda_{3}^3 = -K D_h - \beta,\,\, \lambda_{3}^4 = -K D_n - \gamma\bar\varrho.
\end{align*}
\item[$\bm{B2:}$] \textbf{The steady state $(0, 0, \bar n)$.} The corresponding steady state of \eqref{a14} is $(0,0,0,\bar n)$. The corresponding eigenvalues are:
\begin{align*}
\lambda_{4}^1 &= -K D(L(\bh)) + r\bn,\quad \lambda_{4}^2 = -K  D(z) + \kappa r + r\bn - i k_3\kappa r \tilde g(z,\bh),\\
\lambda_{4}^3 &= -K D_h - \beta,\quad\qquad \,\,\,\, \lambda_{4}^4 = -K D_n.
\end{align*}
\end{itemize}

It is noted that $\lambda_{3}^i$ ($i = 1,2,3,4$) are negative real values and the real parts of $\lambda_{4}^1$ as well as $\lambda_{4}^2$ are positive if $K$ is sufficiently small. Then we conclude that the linearized system \eqref{sys:perturbation2} is stable at the steady state $(\bar\varrho_{\theta},\brho^z,\bar h,0)$, but is unstable at the steady state $(0,0,0,\bar n)$ in this case, i.e., \textbf{CaseB} of \Cref{Thm:linear stability} is established.


\section{An asymptotic preserving scheme for \eqref{sys:kinetic}}\label{sec3}
In this part, we will give an AP scheme for PBDM that converges to a stable discretization for the limit model ADM when $\kappa\to \infty$.

When $k_V = r$, assume that $(\rho^{z,\kappa}(\mathbf{x},z,t),h^{\kappa}(\mathbf{x},t),n^{\kappa}(\mathbf{x},t))$ is a solution to the model \eqref{sys:kinetic} with the initial data \eqref{eq:initial data} and boundary condition \eqref{BC}. When $\kappa\to \infty$, we $formally$ obtain
\[\partial_{z} (g(z, h)\rho^{z}) = 0,\] 
which means that $\rho^z$ is a Dirac delta function in $z$ concentrating at $z = L(h(\mathbf{x},t))$ due to \eqref{g(zh)}. 
That $\rho^z(\mathbf{x},z,t)$ has delta distribution in $z$ brings some difficulties in designing the numerical scheme for PBDM. Let us first introduce some notations. The computational domain is chosen to be
\begin{align*}
\Lambda = \{(x,y,z)|(x,y,z)\in[-L_x,L_x]\times [-L_y,L_y]\times[0,Z_w]\},
\end{align*}
where $L_x$, $L_y$ and $Z_w$ are positive constants. In particular, the value of $Z_w$ is related to the biological experiment. Denote the uniform mesh sizes for $x$, $y$, $z$ and $t$ respectively by $\Delta x$, $\Delta y$, $\Delta z$ and $\Delta t$ and let
\begin{equation*}
\begin{aligned}
x_i = -L_x + i\Delta x,\quad y_j = -L_y + j\Delta y, \quad z_k = k\Delta z,\quad t^m = m\Delta t,
\end{aligned}
\end{equation*}
where $i\in \{0,1, 2,\cdots,N_{x}\}$, $j\in \{0,1, 2,\cdots,N_{y}\}$, $k\in \{0,1, 2,\cdots,N_z\}$ and $m\in N^+$, with $N_{x} = 2L_x/\Delta x$, $N_y = 2L_y/\Delta y$ and $N_z =Z_w/\Delta z$. We define the following approximations:
\begin{equation*}
\begin{aligned}
\rho^m_{i,j,k} &\approx \rho^z(x_i,y_j,z_k,t^m),\quad h^m_{i,j}\approx h(x_i,y_j,t^m),\\
n^m_{i,j}&\approx n(x_i,y_j,t^m),\quad\quad\,\, g_{i,j,k}^m \approx g(z_k,h_{i,j}^m).
\end{aligned}
\end{equation*}
The operators $\delta_{xx }(\cdot)$ and $\delta_{y y}(\cdot)$ are defined as follows:
    \begin{equation*}
        \begin{aligned}
        \delta _{xx}(u_{i,j}) = \frac{u_{i-1,j} - 2u_{i,j} + u_{i+1,j}}{\Delta x^2} ,\quad \delta _{yy}(u_{i,j}) = \frac{u_{i,j-1} - 2u_{i,j} + u_{i,j+1}}{\Delta y^2}.\\
        \end{aligned}
    \end{equation*}

Next we use the alternating difference implicit (ADI) method to reduce the computational cost. From $t^{m}$ to $t^{m+1}$, the AHL concentration $h(\mathbf{x},t)$ is updated by
\begin{equation}\label{dis_AHL}
\begin{aligned}
\frac {h^{m*}_{i,j}-h^m_{i,j}}{\Delta t/2}=&D_h[\delta _{xx}(h_{i,j}^{m*}) + \delta _{yy}(h_{i,j}^{m})]+\alpha \sum_{k=1}^{N_z} \rho _{i,j,k}^m\Delta z-\beta h^{m*}_{i,j},\\
\frac {h^{m+1}_{i,j}-h^{m*}_{i,j}}{\Delta t/2}=&D_h[\delta _{xx}(h_{i,j}^{m*}) + \delta _{yy}(h_{i,j}^{m+1})] + \alpha\sum_{k=1}^{N_z} \rho _{i,j,k}^m\Delta z-\beta h^{m+1}_{i,j}.\\
\end{aligned}
\end{equation}
The nutrient $n(\mathbf{x},t)$ is solved in the same way such that
\begin{equation}\label{dis_nutr}
\begin{aligned}
\frac {n^{m*}_{i,j}-n^m_{i,j}}{\Delta t/2}=&D_n[\delta _{xx}(n_{i,j}^{m*}) + \delta _{yy}(n_{i,j}^{m})] - \gamma n^{m*}_{i,j}\sum_{k=1}^{N_z} \rho _{i,j,k}^m\Delta z,\\
\frac {n^{m+1}_{i,j}-n^{m*}_{i,j}}{\Delta t/2}=&D_n[\delta _{xx}(n_{i,j}^{m*}) + \delta _{yy}(n_{i,j}^{m+1})] - \gamma n^{m+1}_{i,j}\sum_{k=1}^{N_z} \rho _{i,j,k}^m\Delta z.\\
\end{aligned}
\end{equation}

The most difficult part is the discretization of the $\rho^z$ equation. In \cite{Vauchelet2021Numerical}, an AP scheme for a kinetic equation with the internal state was proposed, where limiting delta concentration of the internal variable was considered as well. Straight forward extension of the idea in \cite{Vauchelet2021Numerical} gives the following limiting centered finite difference discretization of the $\Delta_{\mathbf{x}}\big(D(L(h))\varrho\big)$ term in ADM, such that
$$
\delta_{xx}\big(D(L(h_{i,j})\big)\varrho_{i,j})+\delta_{yy}\big(D(L(h_{i,j}))\varrho_{i,j}\big).
$$
However, due to the specific form of $L(h)$ in \eqref{eq:Lh}, $D(L(h))$ may have fast transition in space, the advection part can not be ignored in $\Delta_x\big(D(L(h))\varrho\big)$ and the above centered finite difference discretization is unstable. Therefore, we have to first prepare a stable discretization for ADM and then design the scheme for PBDM accordingly.

We discretize the $\rho^z$-equation of PBDM using time splitting method:
\begin{itemize}
    \item The first step is to solve the equation
    \begin{equation}\label{eq:timesplitting}
        \begin{aligned}
        \partial_t\rho^z + \kappa \partial_{z} (g(z, h)\rho^{z}) = 0.
        \end{aligned}
    \end{equation}
    for one time step.
We use the implicit upwind scheme to discretize this equation
    \begin{equation}\label{dis_split1}
        \begin{aligned}
        &\rho^{m*, \xi}_{i,j,k} = \rho^{m}_{i,j,k} - \tfrac{\kappa \Delta t}{\Delta z}(J^{m*, \xi}_{i,j,k+\frac{1}{2}} - J^{m*, \xi}_{i,j,k-\frac{1}{2}}),\\
        &J^{m*, \xi}_{i,j,k+\frac{1}{2}} = (g^{m + 1, \xi}_{i,j,k})^+\rho^{m*, \xi}_{i,j,k} - (g^{m + 1, \xi}_{i,j,k+1})^-\rho^{m*, \xi}_{i,j,k+1},\quad \text{for $k = 0,\cdots,N_z-1$,}\\
         &J^{m*, \xi}_{i,j,-\frac{1}{2}} = (g^{m + 1, \xi}_{i,j,-1})^+\rho^{m*, \xi}_{i,j,-1} \mathbf{1}_{\{h^{m+1}_{i,j}\leq h_0\}},\\
         &J^{m*, \xi}_{i,j,N_z+\frac{1}{2}} = - (g^{m + 1, \xi}_{i,j,N_z+1})^-\rho^{m*, \xi}_{i,j,N_z+1}\mathbf{1}_{\{h^{m+1}_{i,j}> h_0\}},\\
        \end{aligned}
    \end{equation}
    with 
    \begin{equation}\label{dis_gzh}
    \begin{aligned}
    &g_{i,j,k}^{m+1, \xi} = r\left(\mathcal{R}\left(\tfrac{L(h^{m+1,\xi}_{i,j})}{\Delta z}\right)\Delta z - z_k\right) : = r\left( L^{m+1,\xi}_{i,j} - z_k \right),
    \end{aligned}
    \end{equation}
    for $ \xi\in\{l,\,r,\,o,\,b,\,t\}$. Here 
    \[h^{m+1,\xi}_{i,j} = h^{m+1}_{i-1,j}\mathbf{1}_{\xi = l} + h^{m+1}_{i+1,j}\mathbf{1}_{\xi = r} + h^{m+1}_{i,j}\mathbf{1}_{\xi = o} + h^{m+1}_{i,j-1}\mathbf{1}_{\xi = b} + h^{m+1}_{i,j+1}\mathbf{1}_{\xi = t},\]
with $\mathbf{1}$ being the characteristic function; $\mathcal{R}(\cdot)$ is the rounding operator and $u^+ = \max\{0,u\}$, $u^- = \max\{0,-u\}$. We impose the no-flux boundary condition of internal variable $z$ such that
    \begin{equation}\label{dis_z_boundary}
       \begin{aligned}
     \rho^{m*, \xi}_{i,j,-1} = \rho^{m*, \xi}_{i,j,N_z+1} = 0,
       \end{aligned}
   \end{equation}
 where $\rho^{m*, \xi}_{i,j,-1}$ and $\rho^{m*, \xi}_{i,j,N_z+1}$ are the ghost points. The boundary condition \eqref{dis_z_boundary} yields $J^{m*,\xi}_{i,j,-\frac{1}{2}} = J^{m*,\xi}_{i,j,N_z+\frac{1}{2}} = 0$ for any $i$, $j$. It is important to note that different $\xi$ gives different values of $g_{i,j,k}^{m+1,\xi}$, which corresponds to solve not only  \eqref{eq:timesplitting}, but also
 $$
 \partial_t\rho^z(x,y,z,t) + \kappa \partial_{z} \left(g\Big(z, h(x\pm\Delta x,y\pm\Delta y,z,t)\Big)\rho^{z}(x,y,z,t)\right) = 0.
 $$
 For different $\xi$, $\rho^{m*, \xi}_{i,j,k}$ approximate each other when $\kappa=O(1)$, but concentrate at different $z$ when $\kappa\gg 1$.
    \item
    Secondly, we discretize
    \begin{equation*}
        \begin{aligned}
        \partial_t \rho^z = \nabla_{\mathbf{x}}\cdot(D(z)\nabla_{\mathbf{x}} \rho^{z}) + r n\rho^{z}.
        \end{aligned}
    \end{equation*}for one time step.
The discretization of this equation writes
    \begin{equation}\label{dis_split2}
        \begin{aligned}
        &\frac {\rho^{m+1}_{i,j,k}-\rho^{m*, o}_{i,j,k}}{\Delta t}=D(z_k)\sum_{p = 1}^5A_{{p,i,j,k}}^{m*} + r n^{m+1}_{i,j}\rho^{m*, o}_{i,j,k},\\
        \end{aligned}
    \end{equation}
    \begin{equation*}
        \begin{aligned}
        &A_{1,i,j,k}^{m*} = \tfrac{\rho^{m*,r}_{i-1,j,k} + \rho^{m*,o}_{i-1,j,k}}{2\Delta x^2}- \tfrac{(\rho^{m*,r}_{i-1,j,k} - \rho^{m*,o}_{i-1,j,k})\mathbf{1}_{\{h^{m+1}_{i-1,j}\leq h^{m+1}_{i,j}\}}}{\Delta x^2} 
        \\
        &A_{2,i,j,k}^{m*} = \tfrac{\rho^{m*,t}_{i,j-1,k} + \rho^{m*,o}_{i,j-1,k}}{2\Delta y^2}- \tfrac{(\rho^{m*,t}_{i,j-1,k} - \rho^{m*,o}_{i,j-1,k})\mathbf{1}_{\{h^{m+1}_{i,j-1}\leq h^{m+1}_{i,j}\}}}{\Delta y^2} 
        \\
        &A_{3,i,j,k}^{m*} = -\tfrac{\rho^{m*,l}_{i,j,k} +  2\rho^{m*,o}_{i,j,k} + \rho^{m*,r}_{i,j,k}}{2\Delta x^2} -\tfrac{ \rho^{m*,b}_{i,j,k}+ 2\rho^{m*,o}_{i,j,k}  + \rho^{m*,t}_{i,j,k}}{2\Delta y^2}\\
        &+\tfrac{(\rho^{m*,r}_{i,j,k} - \rho^{m*,o}_{i,j,k})\mathbf{1}_{\{h^{m+1}_{i,j}\leq h^{m+1}_{i+1,j}\}}}{\Delta x^2} +\tfrac{(\rho^{m*,t}_{i,j,k} - \rho^{m*,o}_{i,j,k})\mathbf{1}_{\{h^{m+1}_{i,j}\leq h^{m+1}_{i,j+1}\}}}{\Delta y^2}\\
        &-\tfrac{(\rho^{m*,o}_{i,j,k} - \rho^{m*,l}_{i,j,k})\mathbf{1}_{\{h^{m+1}_{i-1,j}> h^{m+1}_{i,j}\}}}{\Delta x^2} -\tfrac{(\rho^{m*,o}_{i,j,k} - \rho^{m*,b}_{i,j,k})\mathbf{1}_{\{h^{m+1}_{i,j-1}> h^{m+1}_{i,j}\}}}{\Delta y^2}\\
        &A_{4,i,j,k}^{m*} = \tfrac{\rho^{m*,o}_{i+1,j,k} + \rho^{m*,l}_{i+1,j,k}}{2\Delta x^2} + \tfrac{(\rho^{m*,o}_{i+1,j,k} - \rho^{m*,l}_{i+1,j,k})\mathbf{1}_{\{h^{m+1}_{i,j}> h^{m+1}_{i+1,j}\}}}{\Delta x^2}
        \\
        &A_{5,i,j,k}^{m*} = \tfrac{\rho^{m*,o}_{i,j+1,k} + \rho^{m*,b}_{i,j+1,k}}{2\Delta y^2} + \tfrac{(\rho^{m*,o}_{i,j+1,k} - \rho^{m*,b}_{i,j+1,k})\mathbf{1}_{\{h^{m+1}_{i,j}> h^{m+1}_{i,j+1}\}}}{\Delta y^2}.
        \\
        \end{aligned}
    \end{equation*}
    Here $A_{1,i,j,k}^{m*}\approx \tfrac{\rho_{i-1,j,k}^{m*,o}}{\Delta x^2}$, $A_{2,i,j,k}^{m*}\approx \tfrac{\rho_{i,j-1,k}^{m*,o}}{\Delta y^2}$, $A_{3,i,j,k}^{m*}\approx -\tfrac{2\rho_{i,j,k}^{m*,o}}{\Delta x^2} - \tfrac{2\rho_{i,j,k}^{m*,o}}{\Delta y^2}$, $A_{4,i,j,k}^{m*}$$\approx \tfrac{\rho_{i+1,j,k}^{m*,o}}{\Delta x^2}$, $A_{5,i,j,k}^{m*}\approx \tfrac{\rho_{i,j+1,k}^{m*,o}}{\Delta y^2}$, thus the summation on the right hand side of \eqref{dis_split2} is a discretization of the term $\nabla_{\mathbf{x}}\cdot(D(z)\nabla_{\mathbf{x}} \rho^{z})$.   
    The specific forms of $A_{1,i,j,k}^{m*}$, $A_{2,i,j,k}^{m*}$, $A_{3,i,j,k}^{m*}$, $A_{4,i,j,k}^{m*}$ and $A_{5,i,j,k}^{m*}$ are respectively determined by the coefficients in front of $\varrho_{i-1,j}^{m}$, $\varrho_{i,j-1}^{m}$, $\varrho_{i,j}^{m}$, $\varrho_{i+1,j}^{m}$, $\varrho_{i,j+1}^{m}$ in the limiting discretization of the ADM. We will see this from the proof of Theorem \ref{Thm: ap}.
    
\end{itemize}

Furthermore, the discretization of spatial boundary conditions are as follows:
\begin{equation}\label{dis_xy_boundary}
    \begin{aligned}
    &\rho^{m+1}_{0,j,k} = \rho^{m+1}_{1,j,k},\quad \rho^{m+1}_{N_x,j,k} = \rho^{m+1}_{N_x-1,j,k}, \quad \rho^{m+1}_{i,0,k} = \rho^{m+1}_{i,1,k},\quad \rho^{m+1}_{i,N_y,k} = \rho^{m+1}_{i,N_y-1,k},\text{ $\forall$ $i$, $j$, $k$.}\\
    &h^{m+1}_{0,j} = h^{m+1}_{1,j},\quad h^{m+1}_{N_x,j} = h^{m+1}_{N_x-1,j}, \quad h^{m+1}_{i,0} = h^{m+1}_{i,1},\quad h^{m+1}_{i,N_y} = h^{m+1}_{i,N_y-1},\text{ $\forall$ $i$, $j$.}\\
    &n^{m+1}_{0,j} = n^{m+1}_{1,j},\quad n^{m+1}_{N_x,j} = n^{m+1}_{N_x-1,j},\quad n^{m+1}_{i,0} = n^{m+1}_{i,1},\quad n^{m+1}_{i,N_y} = n^{m+1}_{i,N_y-1},\text{ $\forall$ $i$, $j$.}\\
    \end{aligned}
\end{equation}
In summary, the numerical scheme is given by \eqref{dis_AHL}-\eqref{dis_xy_boundary}. In the subsequent part, we prove the AP property of the scheme. 

\begin{lemma}\label{lem1}
Let the sequence $(\rho^{m*,\xi}_{i,j,k})$ satisfies \eqref{dis_split1}. When $\kappa$ $\to$ $\infty$, we have $\rho^{m*,\xi}_{i,j,k} = \hat\varrho_{i,j}^{m}\delta _{k = Z^{m,\xi}_{i, j}}$, with  $Z^{m,\xi}_{i, j} := \frac{L_{i,j}^{m+1,\xi}}{\Delta z}$ and $\hat\varrho_{i,j}^{m} :=\sum_{k=0}^{N_z}\rho^{m}_{i,j,k}$, where $L_{i,j}^{m+1,\xi} = \mathcal{R}\left(\tfrac{L(h^{m+1,\xi}_{i,j})}{\Delta z}\right)\Delta z$.
\end{lemma}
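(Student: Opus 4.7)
The argument rests on two structural features of the upwind scheme \eqref{dis_split1}: discrete mass conservation, and the fact that the rounding operator in \eqref{dis_gzh} places the zero of the transport velocity $g^{m+1,\xi}_{i,j,\cdot}$ exactly on a grid node, namely at $k^* := L^{m+1,\xi}_{i,j}/\Delta z = Z^{m,\xi}_{i,j}$. First I would sum \eqref{dis_split1} over $k\in\{0,\ldots,N_z\}$. The interior flux differences telescope, and the ghost-point prescription \eqref{dis_z_boundary} makes the boundary fluxes $J^{m*,\xi}_{i,j,-\frac{1}{2}}$ and $J^{m*,\xi}_{i,j,N_z+\frac{1}{2}}$ vanish identically (the indicator factors $\mathbf{1}_{\{h\le h_0\}}$, $\mathbf{1}_{\{h>h_0\}}$ then play no role). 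This gives $\sum_k \rho^{m*,\xi}_{i,j,k} = \sum_k \rho^m_{i,j,k} = \hat\varrho^m_{i,j}$ uniformly in $\kappa$, hence also in the limit.

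Next I would extract the limiting profile. Rewriting \eqref{dis_split1} as
\[
J^{m*,\xi}_{i,j,k+\frac{1}{2}} - J^{m*,\xi}_{i,j,k-\frac{1}{2}} = \tfrac{\Delta z}{\kappa \Delta t}\bigl(\rho^{m}_{i,j,k} - \rho^{m*,\xi}_{i,j,k}\bigr),
\]
and using that $(\rho^{m*,\xi}_{i,j,k})_k$ is bounded (by the mass identity just obtained together with nonnegativity), the right-hand side is $O(1/\kappa)$, so $J^{m*,\xi}_{i,j,k+\frac{1}{2}} - J^{m*,\xi}_{i,j,k-\frac{1}{2}}\to 0$ for every $k$. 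Combined with the vanishing boundary fluxes, this forces $J^{m*,\xi}_{i,j,k+\frac{1}{2}} = 0$ for all $k\in\{-1,0,\ldots,N_z\}$ in the limit $\kappa\to\infty$.

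The core step is then converting these zero-flux relations into pointwise vanishing of $\rho^{m*,\xi}_{i,j,k}$ off $k^*$. Because $L^{m+1,\xi}_{i,j}=k^*\Delta z$ lies exactly on the grid, $g^{m+1,\xi}_{i,j,k}=r(L^{m+1,\xi}_{i,j}-z_k)$ is strictly positive for $k<k^*$, zero at $k=k^*$, and strictly negative for $k>k^*$. For $k<k^*$ the upwind identity $J_{k+\frac{1}{2}} = (g_k)^+\rho^*_k - (g_{k+1})^-\rho^*_{k+1} = 0$ reduces to $g_k\rho^*_k=0$ (since $g_{k+1}\ge 0$ kills the second term), forcing $\rho^{m*,\xi}_{i,j,k}=0$; for $k\ge k^*$ the same identity reduces to $-(g_{k+1})^-\rho^*_{k+1}=0$ (since $g_k\le 0$ kills the first term), forcing $\rho^{m*,\xi}_{i,j,k+1}=0$ whenever $k+1>k^*$. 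Thus $\rho^{m*,\xi}_{i,j,k}=0$ for all $k\neq k^*$, and mass conservation places the entire mass $\hat\varrho^m_{i,j}$ at $k=k^*$, which is the claim. The main delicacy I anticipate is the bookkeeping at $k=k^*$, where $(g_{k^*})^+=(g_{k^*})^-=0$ and both fluxes $J_{k^*\pm\frac{1}{2}}$ each contain only one nontrivial summand; one must check that the left-to-right and right-to-left sweeps close at this node without conflict, which is precisely what the on-grid location of the zero of $g$, guaranteed by the rounding operator, ensures.
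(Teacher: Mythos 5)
Your proposal is correct and follows essentially the same route as the paper's proof: pass to the limit $\kappa\to\infty$ to get constant (hence, by the no-flux ghost-point conditions, zero) fluxes $J^{m*,\xi}_{i,j,k+\frac{1}{2}}$, use the sign structure of $g^{m+1,\xi}_{i,j,k}=r(L^{m+1,\xi}_{i,j}-z_k)$ to conclude $\rho^{m*,\xi}_{i,j,k}=0$ for $k\neq Z^{m,\xi}_{i,j}$, and use discrete mass conservation from summing over $k$ to place the mass $\hat\varrho^m_{i,j}$ at that node. Your write-up merely makes explicit two points the paper leaves implicit — the uniform-in-$\kappa$ bound (via nonnegativity plus conservation) justifying that the flux differences vanish, and the node-by-node sweep around $k^*$ — which is a welcome refinement rather than a different argument.
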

\begin{proof}
Taking $\kappa \to \infty$ in \eqref{dis_split1}, we obtain that $J^{m*,\xi}_{i,j,k+\frac{1}{2}} = J^{m*,\xi}_{i,j,k-\frac{1}{2}}$. From \eqref{dis_gzh}, we have $g_{i,j,k}^{m+1,\xi} = r\Big(L^{m+1,\xi}_{i,j} - z_k\Big)$ and then for $k \ne Z^{m,\xi}_{i, j}=\frac{L_{i,j}^{m+1,\xi}}{\Delta z}$, the limit satisfies $J^{m*,\xi}_{i,j,k+\frac{1}{2}} = 0$ due to \eqref{dis_gzh}-\eqref{dis_z_boundary}, i.e., $\rho^{m*,\xi}_{i,j,k} = 0$ for $k \ne Z^{m,\xi}_{i, j}$. Moreover, by using the no-flux boundary condition, we deduce that $\hat\varrho_{i,j}^{m*,\xi} :=\sum_{k=0}^{N_z}\rho^{m*,\xi}_{i,j,k}=\sum_{k=0}^{N_z}\rho^{m}_{i,j,k} :=\hat\varrho_{i,j}^{m}$. Thus, $\rho^{m*,\xi}_{i,j,k} = \hat\varrho_{i,j}^{m}\delta _{k =  Z^{m,\xi}_{i j}}$. The proof is completed.
\end{proof}

Now we prove the AP property of \eqref{dis_AHL}-\eqref{dis_xy_boundary} as follows:
\begin{theorem}\label{Thm: ap}
As $\kappa$ $\to$ $\infty$, the sequence $(\rho^{m}_{i,j,k})_{i,j,k,m}$ computed by the scheme \eqref{dis_AHL}-\eqref{dis_xy_boundary} converges to $(\hat\varrho^{m}_{i,j}\delta_{k = Z^{m,\xi}_{i j}})_{i,j,k,m}$ $formally$. Define $\varrho^{m}_{i,j} := \hat\varrho^{m}_{i,j}\Delta z$,  and $\varrho^{m}_{i,j}$ satisfies the following scheme
 \begin{equation}\label{dis_split2_lim}
        \begin{aligned}
        \frac {\varrho^{m+1}_{i,j}-\varrho^{m}_{i,j}}{\Delta t}=&\mathcal{A}_{x,i,j}^m + \mathcal{A}_{y,i,j}^m  + \frac{\hat f^m_{i+\frac12,j} - \hat f^m_{i-\frac12,j}}{\Delta x} + \frac{\hat f^m_{i,j+\frac12} - \hat f^m_{i,j-\frac12}}{\Delta y} + r n_{i,j}^{m+1}\varrho_{i,j}^m,\\
        \end{aligned}
    \end{equation}
with
\begin{equation*}
    \begin{aligned}
    &\mathcal{A}_{x,i,j}^m = \frac{D^{m+1}_{i-\frac12,j}\varrho_{i-1,j}^m - (D^{m+1}_{i-\frac12,j}+D^{m+1}_{i+\frac12,j})\varrho_{i,j}^m + D^{m+1}_{i+\frac12,j}\varrho_{i+1,j}^m}{\Delta x^2},\\
    &\mathcal{A}_{y,i,j}^m=
    \frac{D^{m+1}_{i,j-\frac12}\varrho_{i,j-1}^m - (D^{m+1}_{i,j-\frac12}+D^{m+1}_{i,j+\frac12})\varrho_{i,j}^m + D^{m+1}_{i,j+\frac12}\varrho_{i,j+1}^m}{\Delta y^2},\\
    &\hat f_{i+\frac12,j}^m =\frac{[D^{m+1}_{i+1,j}-D^{m+1}_{i,j}]^+\varrho_{i+1,j}^m -[D^{m+1}_{i+1,j}-D^{m+1}_{i,j}]^-\varrho_{i,j}^m}{\Delta x},\\
    &\hat f_{i,j+\frac12}^m =\frac{[D^{m+1}_{i,j+1}-D^{m+1}_{i,j}]^+\varrho_{i,j+1}^m -[D^{m+1}_{i,j+1}-D^{m+1}_{i,j}]^-\varrho_{i,j}^m}{\Delta y},\\
    \end{aligned}
\end{equation*}
where $D^{m+1}_{i,j}:=D(L_{i,j}^{m+1})$ and $D^{m+1}_{i\pm\frac{1}{2},j}=\tfrac{D^{m+1}_{i\pm1,j}+D^{m+1}_{i,j}}{2}$, $D^{m+1}_{i,j\pm\frac{1}{2}}=\frac{D^{m+1}_{i,j\pm1}+D^{m+1}_{i,j}}{2}$. Furthermore, from \eqref{dis_AHL} and \eqref{dis_nutr}, the limits of $h_{i,j}^{m}$ and $n_{i,j}^{m}$ satisfy
\begin{equation}\label{dis_AHL_limt}
\begin{aligned}
\frac {h^{m*}_{i,j}-h^m_{i,j}}{\Delta t/2}=&D_h[\delta_{x x}(h_{i,j}^{m*}) + \delta_{y y}(h_{i,j}^{m})]+\alpha \varrho^{m}_{i,j} - \beta h^{m*}_{i,j},\\
\frac {h^{m+1}_{i,j}-h^{m*}_{i,j}}{\Delta t/2}=&D_h[\delta_{x x}(h_{i,j}^{m*}) + \delta_{y y}(h_{i,j}^{m+1})] + \alpha \varrho^{m}_{i,j} - \beta h^{m+1}_{i,j},\\
\end{aligned}
\end{equation}
and
\begin{equation}\label{dis_nutr_lim}
\begin{aligned}
\frac {n^{m*}_{i,j}-n^m_{i,j}}{\Delta t/2}=&D_n[\delta_{x x}(n_{i,j}^{m*}) + \delta_{y y}(n_{i,j}^{m})] - \gamma n^{m*}_{i,j}\varrho^{m}_{i,j},\\
\frac {n^{m+1}_{i,j}-n^{m*}_{i,j}}{\Delta t/2}=&D_n[\delta_{x x}(n_{i,j}^{m*}) + \delta_{y y}(n_{i,j}^{m+1})] - \gamma n^{m+1}_{i,j}\varrho^{m}_{i,j},\\
\end{aligned}
\end{equation}
respectively. The scheme \eqref{dis_split2_lim}-\eqref{dis_nutr_lim} is a consistent and stable discretization for ADM.
\end{theorem}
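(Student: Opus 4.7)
The plan is to pass to the $\kappa\to\infty$ limit in \eqref{dis_AHL}--\eqref{dis_xy_boundary} termwise. Lemma~\ref{lem1} gives, for each $\xi\in\{l,r,o,b,t\}$, $\rho^{m*,\xi}_{i,j,k}\to\hat\varrho^m_{i,j}\,\delta_{k=Z^{m,\xi}_{ij}}$, and in particular $\sum_{k=0}^{N_z}\rho^m_{i,j,k}\Delta z=\hat\varrho^m_{i,j}\Delta z=\varrho^m_{i,j}$. Substituting this identity into \eqref{dis_AHL} and \eqref{dis_nutr} yields \eqref{dis_AHL_limt} and \eqref{dis_nutr_lim} immediately. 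The work is therefore concentrated on the $\rho^z$ update \eqref{dis_split2}: I would multiply by $\Delta z$ and sum over $k$; mass conservation of the $z$-upwind step under the no-flux condition \eqref{dis_z_boundary} gives $\sum_k\rho^{m*,o}_{i,j,k}\Delta z=\varrho^m_{i,j}$, so the LHS becomes $(\varrho^{m+1}_{i,j}-\varrho^m_{i,j})/\Delta t$ and the reaction term is $rn^{m+1}_{i,j}\varrho^m_{i,j}$, reducing the claim to identifying $\sum_k D(z_k)\sum_{p=1}^{5}A^{m*}_{p,i,j,k}\Delta z$ with the spatial operator on the right-hand side of \eqref{dis_split2_lim}.

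To carry out that identification, observe that $z_{Z^{m,\xi}_{i',j'}}=L^{m+1,\xi}_{i',j'}$ and that the directional labels collapse as $L^{m+1,r}_{i-1,j}=L^{m+1,o}_{i,j}=:L^{m+1}_{i,j}$ together with the obvious $l,t,b$ analogues, so $\sum_k D(z_k)\rho^{m*,\xi}_{i',j',k}\Delta z=D(L^{m+1,\xi}_{i',j'})\varrho^m_{i',j'}$. Inserting this into $A^{m*}_1$, the averaged piece becomes $\tfrac{D^{m+1}_{i,j}+D^{m+1}_{i-1,j}}{2\Delta x^2}\varrho^m_{i-1,j}=\tfrac{D^{m+1}_{i-1/2,j}\varrho^m_{i-1,j}}{\Delta x^2}$, which is exactly the $\varrho^m_{i-1,j}$ contribution to $\mathcal{A}^m_{x,i,j}$. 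The indicator piece becomes $-\tfrac{(D^{m+1}_{i,j}-D^{m+1}_{i-1,j})\varrho^m_{i-1,j}\mathbf{1}_{\{h^{m+1}_{i-1,j}\leq h^{m+1}_{i,j}\}}}{\Delta x^2}$; since $L$ is non-increasing and $D$ is increasing, $h^{m+1}_{i-1,j}\leq h^{m+1}_{i,j}\Longleftrightarrow D^{m+1}_{i,j}\leq D^{m+1}_{i-1,j}$, so this piece rewrites as $\tfrac{[D^{m+1}_{i,j}-D^{m+1}_{i-1,j}]^-\varrho^m_{i-1,j}}{\Delta x^2}$, which is precisely the $\varrho^m_{i-1,j}$ contribution to $-\hat f^m_{i-1/2,j}/\Delta x$ in \eqref{dis_split2_lim}. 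Repeating the same bookkeeping for $A^{m*}_2,\ldots,A^{m*}_5$ recovers all of $\mathcal{A}^m_{x,i,j}+\mathcal{A}^m_{y,i,j}$ together with $(\hat f^m_{i+1/2,j}-\hat f^m_{i-1/2,j})/\Delta x+(\hat f^m_{i,j+1/2}-\hat f^m_{i,j-1/2})/\Delta y$, establishing \eqref{dis_split2_lim}.

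For consistency with \eqref{sys:limit}, a Taylor expansion shows that $\mathcal{A}^m_{x,i,j}+\mathcal{A}^m_{y,i,j}$ is a standard symmetric finite-volume approximation of $\nabla_{\mathbf{x}}\cdot(D(L(h))\nabla_{\mathbf{x}}\varrho)$ and that the divergence of the $\hat f$ fluxes is a first-order upwind approximation of $\nabla_{\mathbf{x}}\cdot(\varrho\,\nabla_{\mathbf{x}}D(L(h)))$, whose sum reproduces $\Delta_{\mathbf{x}}(D(L(h))\varrho)$; combined with $rn^{m+1}_{i,j}\varrho^m_{i,j}$ and the ADI steps \eqref{dis_AHL_limt}--\eqref{dis_nutr_lim} this matches \eqref{sys:limit} at leading order in $\Delta x,\Delta y,\Delta t$. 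For stability, assembling the coefficients in \eqref{dis_split2_lim} shows that each neighbour $\varrho^m_{i\pm1,j},\varrho^m_{i,j\pm1}$ receives a non-negative weight of the form $(D^{m+1}_{i\pm1/2,j}+[\,\cdot\,]^{\pm})/\Delta x^2$ (resp.\ $/\Delta y^2$), while the self-coefficient is negative; under a parabolic-type CFL restriction on $\Delta t$ the explicit update is a non-negative convex combination of neighbour values plus the non-negative reaction source $rn^{m+1}_{i,j}\varrho^m_{i,j}$, yielding positivity and $L^\infty$-stability, while the implicit ADI sweeps for $h$ and $n$ are unconditionally stable.

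The main obstacle is the combinatorial matching in the second step: each $A^{m*}_p$ combines $\rho^{m*,\xi}$ for several $\xi$-labels together with multiple indicator functions, and aligning these with the diffusion-plus-upwind-advection structure of \eqref{dis_split2_lim} relies on (i) the identifications $L^{m+1,r}_{i-1,j}=L^{m+1}_{i,j}$ and its analogues, which are built into the branch construction of \eqref{dis_split1}, and (ii) the monotonicity of $h\mapsto D(L(h))$, which converts every indicator $\mathbf{1}_{\{h^{m+1}_\bullet\leq h^{m+1}_\bullet\}}$ into a $[\,\cdot\,]^{\pm}$ of a discrete difference of $D^{m+1}$ values. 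Once this translation is carried out, consistency and stability reduce to routine verifications.
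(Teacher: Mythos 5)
Your proposal is correct and follows essentially the same route as the paper: apply Lemma \ref{lem1}, multiply \eqref{dis_split2} by $D(z_k)\Delta z$ (resp.\ $\Delta z$) and sum over $k$ so that each branch $\rho^{m*,\xi}$ contributes $D(L^{m+1,\xi})\varrho^m$, substitute $\sum_k\rho^m_{i,j,k}\Delta z=\varrho^m_{i,j}$ into \eqref{dis_AHL}--\eqref{dis_nutr}, and conclude stability under a parabolic CFL condition $\Delta t<C\Delta x^2$. You merely spell out the term-by-term bookkeeping (the label identifications $L^{m+1,r}_{i-1,j}=L^{m+1,o}_{i,j}$ and the translation of the indicators into $[\,\cdot\,]^{\pm}$ via the monotonicity of $h\mapsto D(L(h))$), which the paper leaves implicit.
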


\begin{proof}
Using Lemma \ref{lem1}, we obtain that
$\lim\limits_{\kappa\to \infty}\rho^{m*,\xi}_{i,j,k} = \hat\varrho_{i,j}^{m}\delta _{k =  Z^{m,\xi}_{i j}}$. Thus, 
\[\lim\limits_{\kappa\to \infty}\sum_{k=0}^{N_z}D(z_{k})\rho^{m*,\xi}_{i,j,k}\Delta z = D(z_{Z^{m,\xi}_{ij}})\varrho^{m}_{i,j} = D(L^{m+1,\xi}_{i,j})\varrho^{m}_{i,j},\]
for $\xi\in\{l,\,r,\,o,\,b,\,t\}$. By summing up \eqref{dis_split2} over $k$ and multiplying both sides by $\Delta z$, we can deduce \eqref{dis_split2_lim}. Substituting $\varrho^m_{i,j} = \hat\varrho^{m}_{i,j} \Delta z = \sum_{k=0}^{N_z}\rho^{m}_{i,j,k}\Delta z$ into \eqref{dis_AHL} and \eqref{dis_nutr}, we can obtain the discretizations of $h(\mathbf{x},t)$ in \eqref{dis_AHL_limt} and $n(\mathbf{x},t)$ in \eqref{dis_nutr_lim}. When $\Delta t<C\Delta x^2$, \eqref{dis_split2_lim}-\eqref{dis_nutr_lim} is a stable discretization of the limit model ADM. The proof is completed.
\end{proof}

\begin{remark}
The scheme \eqref{dis_AHL}-\eqref{dis_xy_boundary} is designed to investigate the asymptotic behavior of model \eqref{sys:kinetic} with $k_V = r$ when $\kappa$ $\to$ $\infty$, and we can still use it to solve PBDM with $k_V = r n(\mathbf{x},t)$.
\end{remark}


\section{Numerical results}\label{sec4}
In this section, we investigate the asymptotic behavior and stability of model \eqref{sys:kinetic} numerically. Firstly, we verify numerically that the solution of model \eqref{sys:kinetic} converges to the solution of anisotropic diffusion model \eqref{sys:limit} as $\kappa$ becomes larger. Then we demonstrate the stability/instability of the system \eqref{sys:kinetic} around the non-trivial steady states numerically and provide some interesting phenomena for the pattern formation.

\subsection{Performance of the scheme \eqref{dis_AHL}-\eqref{dis_xy_boundary}}\label{subsec4.1}
We set
\begin{equation}\label{para:step1}
\begin{aligned}
L_x = L_y = 2,\quad Z_w = 1.23,\quad \Delta x = \Delta y = 0.01, \quad\Delta t = 2.5\times10^{-5}.
\end{aligned}
\end{equation}
For the model parameters, we choose the same values as in \cite{Xue2018The}:
\begin{equation}\label{para:1}
\begin{aligned}
&h_0 = 0.25,\,\, r = 0.6931,\,\, D_h = 0.9, \,\, \beta = 2D_h,\,\, \alpha = \beta,\,\, D_n = 2r,\,\, \gamma = 3r.
\end{aligned}
\end{equation}
The diffusion coefficient $D(z)$ is an increasing function in $z$, we take 
\[D(z) = \tfrac{z}{2Z_w} + 0.01,\quad L(h) = Z_w\Big(0.5 - 0.5\tanh\big(30(h-h_0)\big)\Big).\] 
The initial data are
\begin{equation*}
\begin{aligned}
&\rho^{z,0} (\mathbf{x},z) = \Big(h_0 + 0.01(\cos 2\pi x + \cos 2\pi y)\Big)\delta(z - L(h_0)),\\
&h^0(\mathbf{x}) = h_0 + 0.01(\cos 2\pi x + \cos 2\pi y ),\\
&n^0(\mathbf{x}) = 0.
\end{aligned}
\end{equation*}

To verify the model convergence, we simulate PBDM based on the scheme \eqref{dis_AHL}-\eqref{dis_xy_boundary} for $\kappa = 8,\,16,\,32,\,64$, and using a fine mesh ($\Delta z = 0.00375$) in $z$. The numerical results of ADM are given by the scheme \eqref{dis_split2_lim}-\eqref{dis_nutr_lim}. The results are displayed in \Cref{fig1} and we can see that as $\kappa$ increases, the solutions of PBDM get closer to the solution of ADM.
To show the AP property of the scheme in \eqref{dis_AHL}-\eqref{dis_xy_boundary}, numerical results of a larger $\Delta z$ ($\Delta z = 0.03$) for $\kappa = 8,\,16,\,32,\,64$, are given in \Cref{fig2}. Similar convergence can be observed,
which indicates that the scheme can capture the right solution behavior when $\Delta z$ does not resolve $\frac{1}{\kappa}$.

\begin{figure}[h]
\centering
\includegraphics [width=10.5cm]{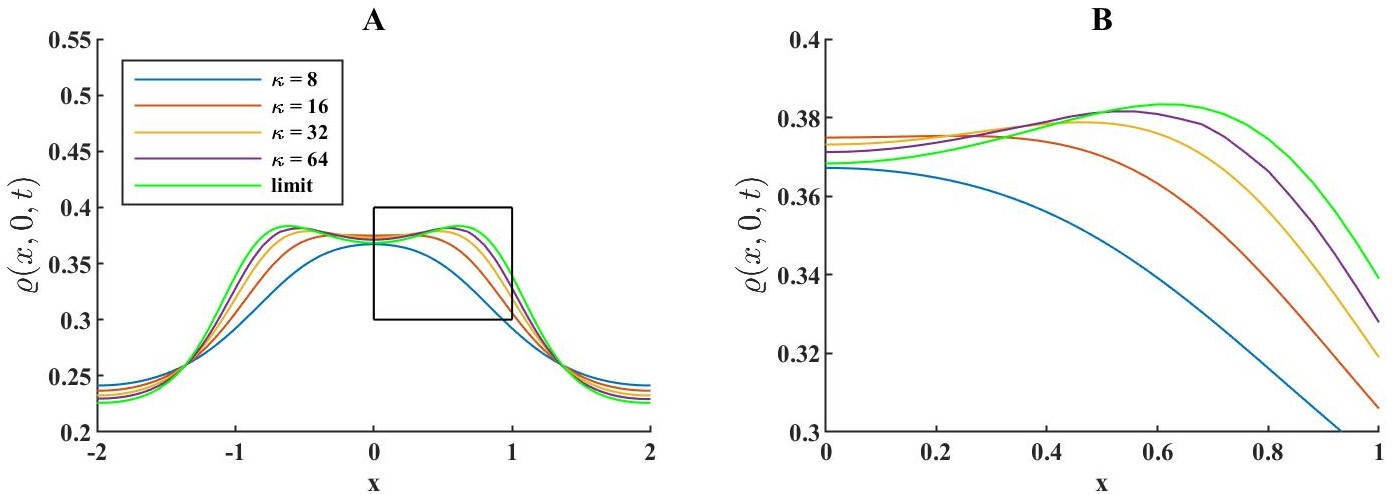}\\
\caption{Verification of the model convergence when $\kappa\to \infty$. A) the numerical results of $\varrho(x,0,t)$ at $t=3$ calculated with $\Delta z = 0.00375$, $\Delta t =2.5\times10^{-5}$ and $\Delta x = \Delta y = 0.01$ for different $\kappa = 8,\,16,\,32,\, 64$. B) the zoom in of the box in figure A.}
\vspace{-0.2cm}
\label{fig1}
\end{figure}

\begin{figure}[h]
\centering
\includegraphics [width=10.5cm]{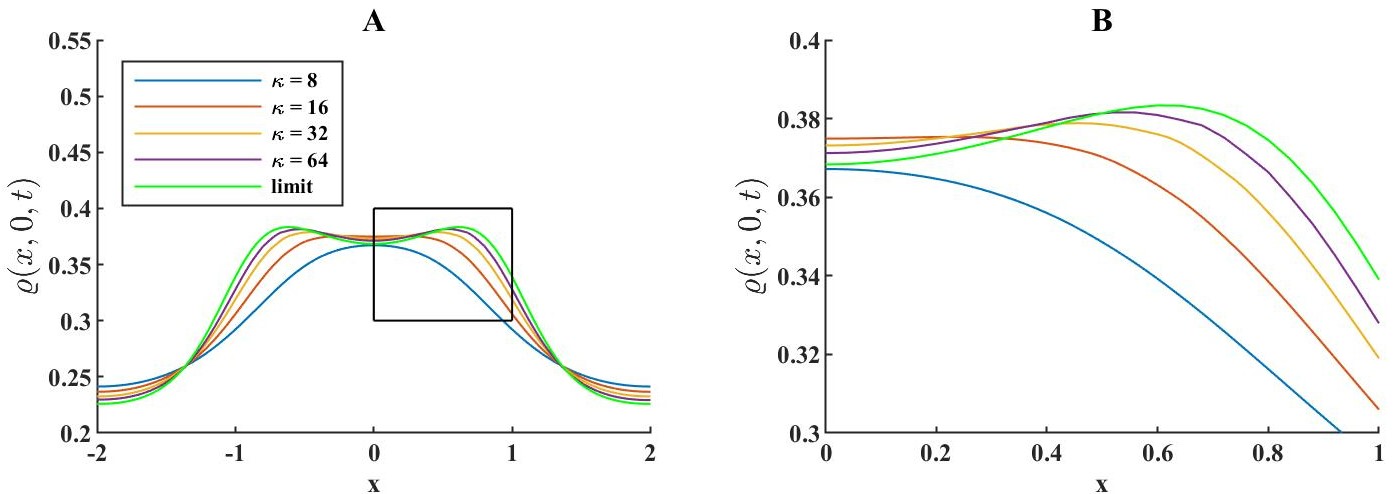}\\
\caption{Verification of the AP property of the scheme. A) the numerical results of $\varrho(x,0,t)$ at $T=3$ calculated with coarse mesh $\Delta z = 0.03$, $\Delta t =2.5\times10^{-5}$ and $\Delta x = \Delta y = 0.01$ for different $\kappa = 8,\,16,\,32,\, 64$. B) the zoom in of the box in figure A.}
\vspace{-0.2cm}
\label{fig2}
\end{figure}

\begin{figure}[h]
\includegraphics [width=13cm]{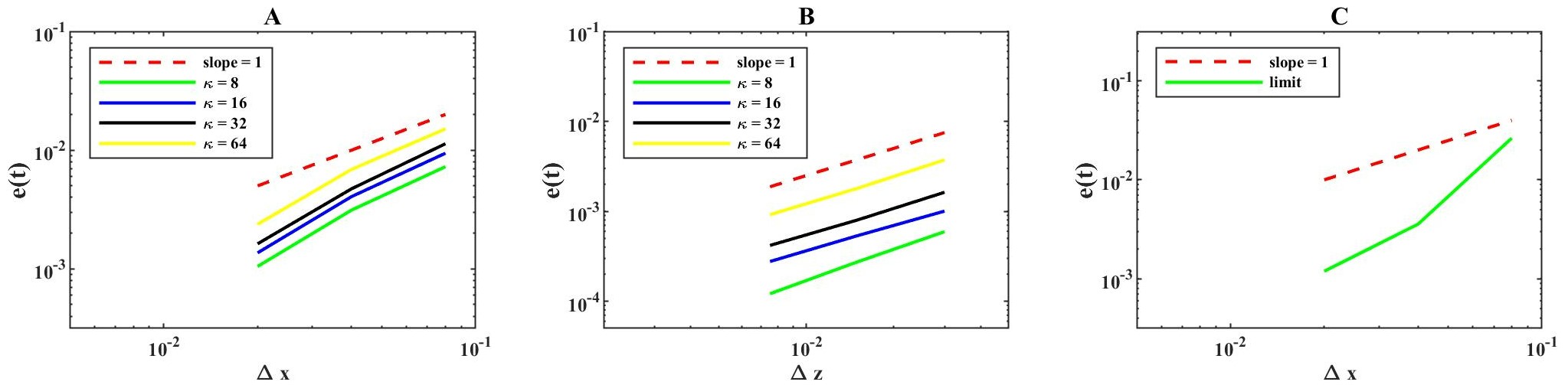}\\
\vspace{-2em}
\caption{Uniform convergence order with respect to $\kappa$. Relative errors defined in \eqref{a20} at $T=3$ for different meshes are displayed. A) the results using different $\Delta x =\Delta y = 0.02$, $0.04$ and $0.08$ with $\Delta t=0.25\Delta x^2$ and $\Delta z=0.00375$; B) the results using different $\Delta z = 0.0075$, $0.015$ and $0.03$ with $\Delta t=0.25\Delta x^2$ and $\Delta x =\Delta y= 0.01$; C) the results using different $\Delta x = 0.02$, $0.04$ and $0.08$ with $\Delta t=0.25\Delta x^2$. The reference solution is calculated with $\Delta t =2.5\times10^{-5}$, $\Delta x = \Delta y = 0.01$ and $\Delta z = 0.00375$. }
\vspace{-0.1cm}
\label{fig3}
\end{figure}

To justify the AP property, we show the uniform convergence order of the scheme for PBDM using different $\kappa= 8,\, 16,\, 32,\, 64$ in Figure \ref{fig3}. The relative errors are defined as follows:
\begin{equation}\label{a20}
    \begin{aligned}
    \textbf{e}(t^m) =\sqrt{\frac{\sum_{i,j}{\sum_{k = 0}^{N_z}\left(\rho^{m}_{i,j,k} - \rho^z(x_i,y_j,z_k,t^m) \right)^2}}{\sum_{i,j}{\sum_{k = 0}^{N_z} \rho^z(x_i,y_j,z_k,t^m)^2}}}.
    \end{aligned}
\end{equation}
Here $\rho^m_{i,j,k}$ is the numerical solution and $\rho^z(x_i,y_j,z_k,t^m)$ is the reference solution obtained numerically by using a very fine mesh such that $\Delta t =2.5\times10^{-5}$, $\Delta x = \Delta y = 0.01$ and $\Delta z = 0.00375$. It is shown in \Cref{fig3} that the scheme \eqref{dis_AHL}-\eqref{dis_xy_boundary} has uniform first order convergence in $\Delta z$, $\Delta x$ and $\Delta y$.

\subsection{Relative deviation function}\label{subsec4.2}
The dependence of $\rho^z$ on the internal state $z$ can induce the instability of PBDM. However, $\varrho$ does not depend on $z$ (since it is the integral of $\rho^z$ over $z$). Thus, even when there is no pattern for $\varrho$, one can not conclude the stability of PBDM. It is necessary to define an auxiliary function to characterize the relative deviation caused by the internal state. We introduce the following relative deviation function
\begin{equation}\label{a23}
\begin{aligned}
R_{\varrho}(t) = \frac{\max_{z\in[0,Z_w]}\{\|\rho^z - \bar\rho^z\|_{\infty}\}}{\max_{z\in[0,Z_w]}\{\|\rho^{z,0} - \bar\rho^z\|_{\infty}\}},\\
\end{aligned}
\end{equation}
where $\|\cdot\|_{\infty}$ denotes the $L^{\infty}$-norm with respect to $\mathbf{x}$ and $\rho^{z,0}$ is the initial data. The function \eqref{a23} characterizes the evolution of the relative deviation of the solution from the steady state $(\bar\rho^z,\bar h,0)$. Note that for the numerical simulation, the relative form \eqref{a23} is well-defined. Theoretically, it may go to infinity, but the numerical values of $\rho^z|_{z = L(\bh)}$ or $\bar\rho^z|_{z = L(\bh)}$ might be large but not infinity.

\subsection{The instability of model \eqref{sys:kinetic} with  $k_V = r$}\label{subsec4.3}

In this subsection, our goal is to characterize the instability of PBDM around the steady states $(\brho^z, \bh, 0)$ and $(0, 0, n_0)$ numerically. Set
\begin{equation}\label{setting}
    \begin{aligned}
    L_x = L_y = 0.5,\,\, Z_w = 1.23,\,\, \Delta x = \Delta y = 0.01,\,\, \Delta z = 0.03,\,\, \Delta t = 2.5\times 10^{-5}.
    \end{aligned}
\end{equation}
Note that the size of the domain is smaller compared with \eqref{para:step1}, which reduces the computational cost. We use the following parameters:
\begin{equation}\label{para:2}
\begin{aligned}
&h_0 = 5,\quad r = 0.6931,\quad D_h = 0.1,\quad \alpha = \beta = 1.8, \quad D_n = 2r,\quad \gamma = 3r,\\
&D(z) = \tfrac{z}{2Z_w} + 0.01,\quad L(h) = Z_w\Big(0.5 - 0.5\tanh\big(30(h-h_0)\big)\Big).
\end{aligned}
\end{equation}
These parameters are not related to the biological experiment. In the numerical simulations, we only consider the case $\bar h \leq h_0$, the case $\bh > h_0$ can be considered similarly, and the details are omitted here.  

\textbf{Case A1. The steady state $(\frac{h_0}{2}\delta(z - L(\tfrac{h_0}{2})),\frac{h_0}{2},0)$. }We choose the initial data to be
\begin{equation}\label{numerical_initial_1}
\begin{aligned}
&\rho^{z,0}(\mathbf{x},z) = \frac{h_0}{2}\delta(z - L(\tfrac{h_0}{2}))  + 0.02\cos{\frac{6\pi x}{L_x}}\cos{\frac{8\pi y}{L_y}}\sin{\frac{2\pi z}{Z_w}},\\
&h^0(\mathbf{x}) = \frac{h_0}{2} + 0.02\cos{\frac{6\pi x}{L_x}}\cos{\frac{8\pi y}{L_y}},\\
&n^0(\mathbf{x}) = 0.\\
\end{aligned}
\end{equation}
It should be noted that the initial condition $\rho^{z,0}(\mathbf{x},z)$ might be negative, which contradicts to the biological background. But our aim is to numerically verify the theoretical results of stability, and this setting is reasonable. \Cref{fig4}A gives the time evolution of $R_{\varrho}(t)$ around the steady state $(\frac{h_0}{2}\delta(z - L(\tfrac{h_0}{2})),\frac{h_0}{2},0)$. We observe that $R_{\varrho}(t)$ increases first and then decreases towards $0$.  This is because $\rho^z$ first goes away from the steady state since all internal state of $\rho^z$ has a tendency to become $z = L(\tfrac{h_0}{2}) \approx Z_w$, which is driven by the advection term $\kappa\partial_z(g(z,h)\rho^z)$. The internal state reaches the steady state $Z_w$ and the diffusion term leads to the uniform spatial distribution of $\rho^z$ for any $z\in[0,Z_w]$ (Note that $\int_{\Lambda}\delta\rho^z\mathrm{d}\mathbf{x}\mathrm{d}z = 0$). Then the system is unstable around the steady state $(\frac{h_0}{2}\delta(z - L(\tfrac{h_0}{2})),\frac{h_0}{2},0)$ even if there is no pattern formation.
\begin{figure}[h]
\centering
\includegraphics [width=10cm]{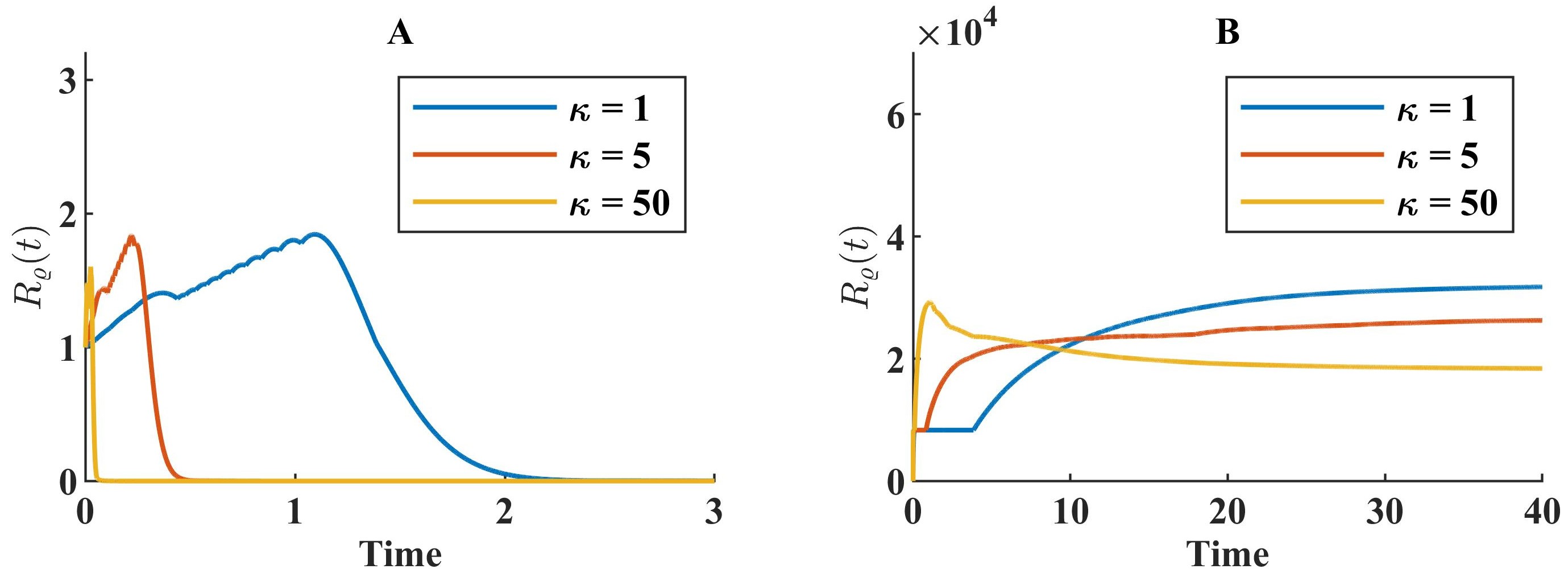}\\
\vspace{-1em}
\caption{Case A1. The evolution of the relative deviation function $R_{\varrho}(t)$ with $k_V = r$ for different $\kappa$. A: The steady state  $(\frac{h_0}{2}\delta(z-L(\tfrac{h_0}{2})), \frac{h_0}{2},0)$ with the initial data \eqref{numerical_initial_1}. B: The steady state $(h_0\delta(z-L(h_0)),h_0,0)$ with the initial data \eqref{numerical_initial_2}. Both steady states are unstable. Note that the larger $\kappa$, the faster $R_{\varrho}(t)$ grows near the origin.} 
\vspace{-0.2cm}
\label{fig4}
\end{figure}

\textbf{Case A1. The steady state $(h_0\delta(z - L(h_0)),h_0,0)$. }The initial data are
\begin{equation}\label{numerical_initial_2}
\begin{aligned}
&\rho^{z,0}(\mathbf{x},z) = h_0\delta(z - L(h_0))  + 0.02\cos{\frac{6\pi x}{L_x}}\cos{\frac{8\pi y}{L_y}}\sin{\frac{2\pi z}{Z_w}},\\
&h ^0(\mathbf{x}) = h_0 + 0.02\cos{\frac{6\pi x}{L_x}}\cos{\frac{8\pi y}{L_y}},\\
&n ^0(\mathbf{x}) = 0.\\
\end{aligned}
\end{equation}
In this case, patterns can form by using $L_x = L_y = 20$, $\Delta t = 0.0025$, $\Delta x = \Delta y = 0.1$ (other parameters remain unchanged). The dynamics of $R_{\varrho}(t)$ and $\varrho(\mathbf{x},t)$ indicate that PBDM is unstable around the steady state $(h_0\delta(z - L(h_0)),h_0,0)$ (see \Cref{fig4}B). Moreover, dot patterns can be observed as in \Cref{fig5}.

\begin{figure}[h]
\centering
\includegraphics [width=13cm]{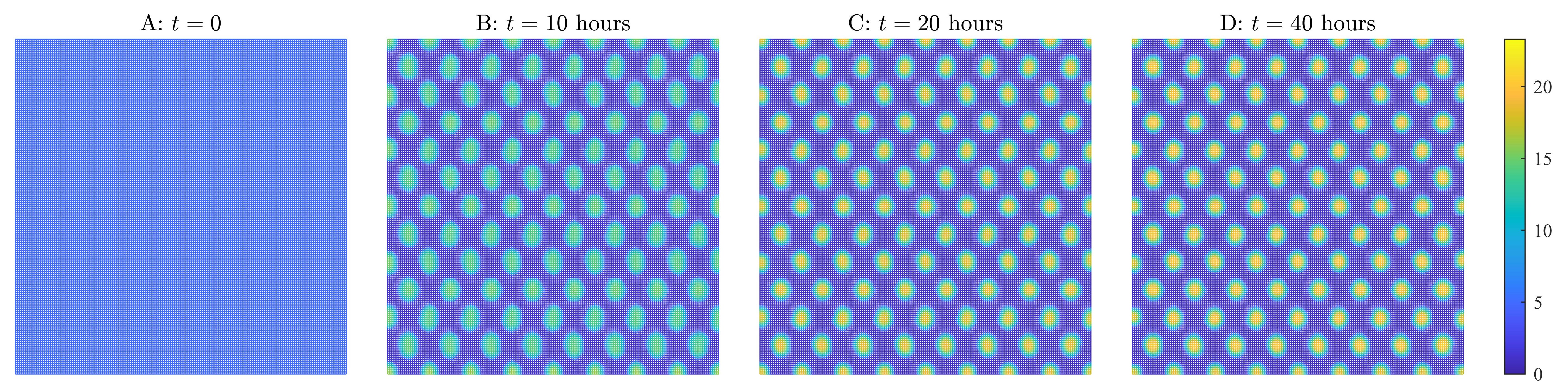}\\
\vspace{-1em}
\caption{Case A1. The time evolution of the density $\varrho(\mathbf{x},t)$ calculated with $k_V = r$, $\kappa = 1$ and initial data \eqref{numerical_initial_2}. It is shown that the steady state is $(h_0\delta(z - L(h_0)),h_0,0)$ is unstable due to the pattern formation of $\varrho(\mathbf{x},t)$. }
\vspace{-0.25cm}
\label{fig5}
\end{figure}

\textbf{Case A2. The steady state $(0,0,0.5)$. }Consider the following initial data
\begin{equation}\label{numerical_initial_3}
    \begin{aligned}
    &\rho^{z,0}(\mathbf{x},z) =  0.02\Big|\cos{\frac{6\pi x}{L_x}}\cos{\frac{8\pi y}{L_y}}\sin{\frac{2\pi z}{Z_w}}\Big|,\\
    &h ^0(\mathbf{x}) = 0.02\Big|\cos{\frac{6\pi x}{L_x}}\cos{\frac{8\pi y}{L_y}}\Big|,\\
    &n ^0(\mathbf{x}) = 0.5 + 0.02\cos{\frac{6\pi x}{L_x}}\cos{\frac{8\pi y}{L_y}}.\\
    \end{aligned}
\end{equation}
\Cref{fig6} shows that the model \eqref{sys:kinetic} is unstable at the steady state $(0,0,0.5)$. Combining with the structure of model, we know that nutrition $n(\mathbf{x},t)$ will be exhausted eventually no matter how small $\rho^{z,0}$ is, i.e., $n(\mathbf{x},t)$ can't go back to the original state $n^0(\mathbf{x},t)>0$. So it is obvious that the steady state $(0,0,\bn)$ is unstable if $\bn$ is positive.

\begin{figure}[http!]
\centering
\vspace{-1em}
\includegraphics [width=10cm]{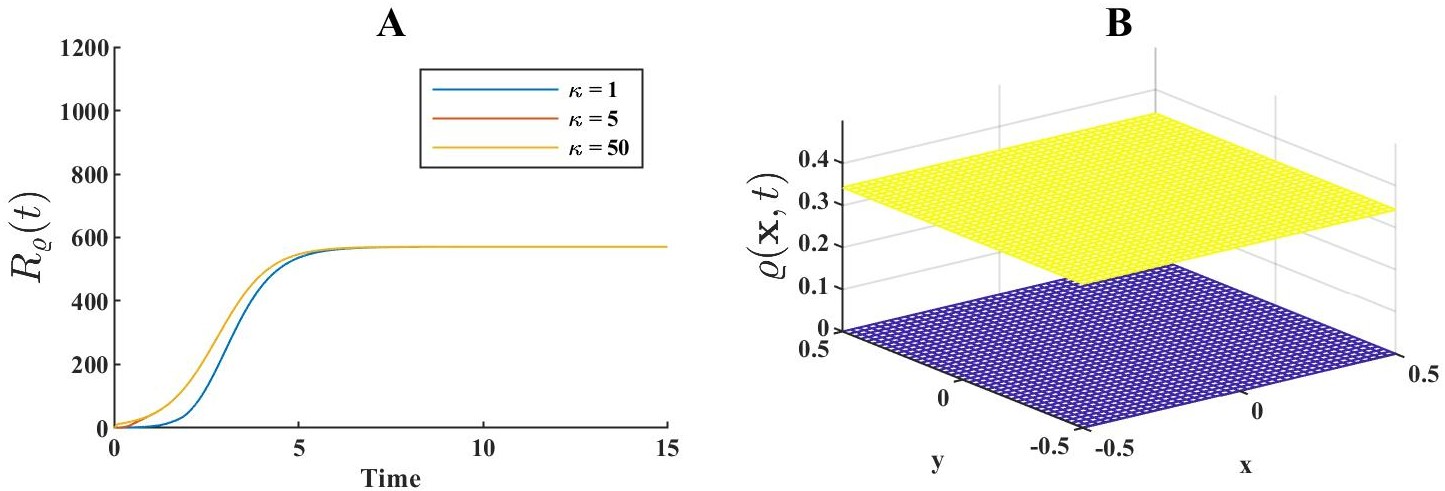}\\
\vspace{-1em}
\caption{Case A2. A) The evolution of the derivation function $R_{\varrho}(t)$ with $k_V = r$ and the initial data \eqref{numerical_initial_3}. B) the state of $\varrho(\mathbf{x},t)$ at $t = 15$h (yellow) and the blue one is $\bar\varrho = 0$. Here we choose different $\kappa$. Note that the larger $\kappa$, the faster $R_{\varrho}(t)$ grows near the origin, but values of $\kappa$ have no influence on $\varrho(\mathbf{x},15)$. It is displayed that the steady state $(0,0,0.5)$ is unstable. }
\label{fig6}
\end{figure}
\subsection{Comparison of the stability results for ADM}
As a comparison, we provide numerical simulations of the limit model (\ref{sys:limit}) at the corresponding steady states $(\bar\varrho, \bar h, \bar n)$. We use the same parameters \eqref{setting} and \eqref{para:2} used in \Cref{subsec4.3}. To characterize the instability/stability of the limit model, we introduce the following relative deviation function
\begin{equation}\label{limit_deviation_funct}
\begin{aligned}
R^*_{\varrho}(t) = \frac{\|\varrho(\mathbf{x}, t) - \bar\varrho\|_{\infty}}{\|\varrho(\mathbf{x}, 0) - \bar\varrho\|_{\infty}}.\\
\end{aligned}
\end{equation}

\Cref{limit_2}A is the evolution of $R_{\varrho}^*(t)$ around the steady state $(\frac{h_0}{2}, \frac{h_0}{2}, 0)$. We observe that $R_{\varrho}^*(t)$ decreases towards $0$ directly, which indicates that the model (\ref{sys:limit}) is stable around the steady state $(\frac{h_0}{2}, \frac{h_0}{2}, 0)$. This is consistent with the curve in \Cref{fig4}A. Compared with (\ref{a23}), $R_{\varrho}^*(t)$ can only characterize the relative deviation between $\varrho(\mathbf{x},t)$ and $\bar\varrho$, but not the internal state due to the $z$-integration. On the other hand, it is displayed in \Cref{limit_2}B that the model (\ref{sys:limit}) is unstable at the steady state $(h_0, h_0, 0)$. \Cref{limit_1} shows that the limit model (\ref{sys:limit}) is unstable at the steady state $(0, 0, 0.5)$. 

\begin{figure}[htbp!]
\centering
\vspace{-1em}
\includegraphics [width=10cm]{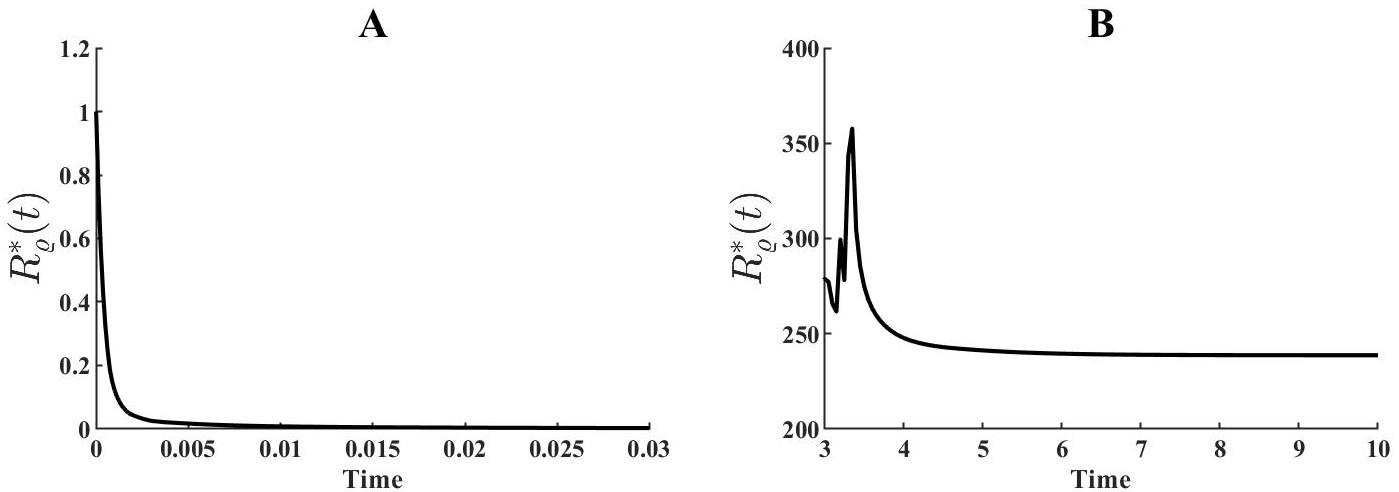}\\
\caption{The stability of ADM around $(\frac{h_0}{2}, \frac{h_0}{2},0)$ and $(h_0,h_0,0)$. A) $(\frac{h_0}{2}, \frac{h_0}{2},0)$. The evolution of the relative deviation function $R^*_{\varrho}(t)$ with the initial data \eqref{numerical_initial_1}.    . B) $(h_0,h_0,0)$.  The evolution of the relative deviation function $R^*_{\varrho}(t)$ with the initial data \eqref{numerical_initial_2}.}
\vspace{-0.2cm}
\label{limit_2}
\end{figure}

\begin{figure}[h]
\centering
\vspace{-1em}
\includegraphics [width=10cm]{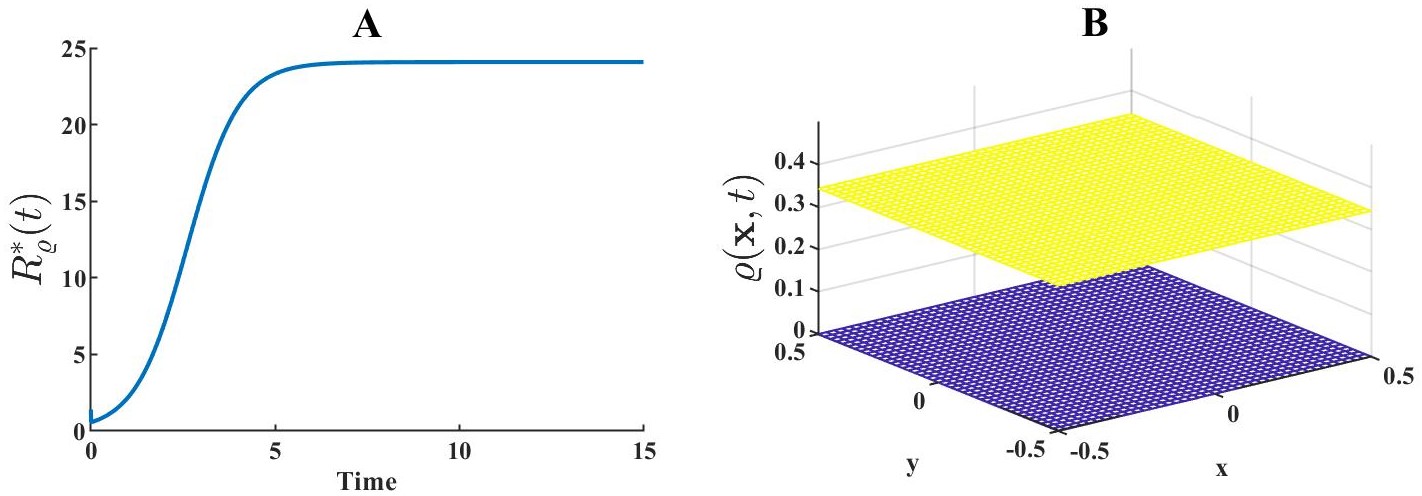}\\
\caption{The stability of ADM around $(0,0,0.5)$. A) the evolution of the derivation function $R^*_{\varrho}(t)$ with the initial data \eqref{numerical_initial_3}. B) The state of $\varrho(\mathbf{x},t)$ at $t = 15$h (yellow) and the blue one is $\bar\varrho = 0$.}
\label{limit_1}
\end{figure}


\subsection{The stability/instability with $k_V = r n(\mathbf{x},t)$}\label{subsec4.4}
In this subsection, we present the numerical simulation of model \eqref{sys:kinetic} with $k_V = r n(\mathbf{x},t)$. We choose the parameters \eqref{setting} and \eqref{para:2} used in \Cref{subsec4.3}. For the steady state $(\brho^z,\bh,0)$, we consider the initial data \eqref{numerical_initial_1} and \eqref{numerical_initial_2}. \Cref{fig7} shows that $R_{\varrho}(t)$ monotonically decreases to $0$, i.e., the model \eqref{sys:kinetic} with $k_V = r n(\mathbf{x},t)$ is stable at the steady states.

As we mentioned in Subection \ref{subsec:Preliminaries}, the distribution of $\bar \rho^z$ with respect to $z$ is uncertain when $k_V = r n(\mathbf{x},t)$. When $\brho^z$ is a continuous function, we consider the following initial data:
\begin{equation}\label{numerical_initial_4}
\begin{aligned}
&\rho^{z,0}(\mathbf{x},z) = \frac{h_0}{2Z_w} + 0.02\cos{\frac{6\pi x}{L_x}}\cos{\frac{8\pi y}{L_y}}\sin{\frac{2\pi z}{Z_w}},\\
\end{aligned}
\end{equation}
and
\begin{equation}\label{numerical_initial_5}
\begin{aligned}
&\rho^{z,0}(\mathbf{x},z) = \frac{h_0}{Z_w}  + 0.02\cos{\frac{6\pi x}{L_x}}\cos{\frac{8\pi y}{L_y}}\sin{\frac{2\pi z}{Z_w}},\\
\end{aligned}
\end{equation}
with $h^0(\mathbf{x})$ and $n^0(\mathbf{x})$ set in \eqref{numerical_initial_1}, \eqref{numerical_initial_2} respectively.\Cref{fig8} displays that $R_{\varrho}(t)$ decreases to $0$ monotonically, then the model \eqref{sys:kinetic} is both stable around the steady state $(\tfrac{h_0}{2Z_w}, \tfrac{h_0}{2},0)$ and $(\tfrac{h_0}{Z_w}, h_0, 0)$.
\begin{figure}[htbp!]
\centering
\vspace{-1em}
\includegraphics [width=10cm]{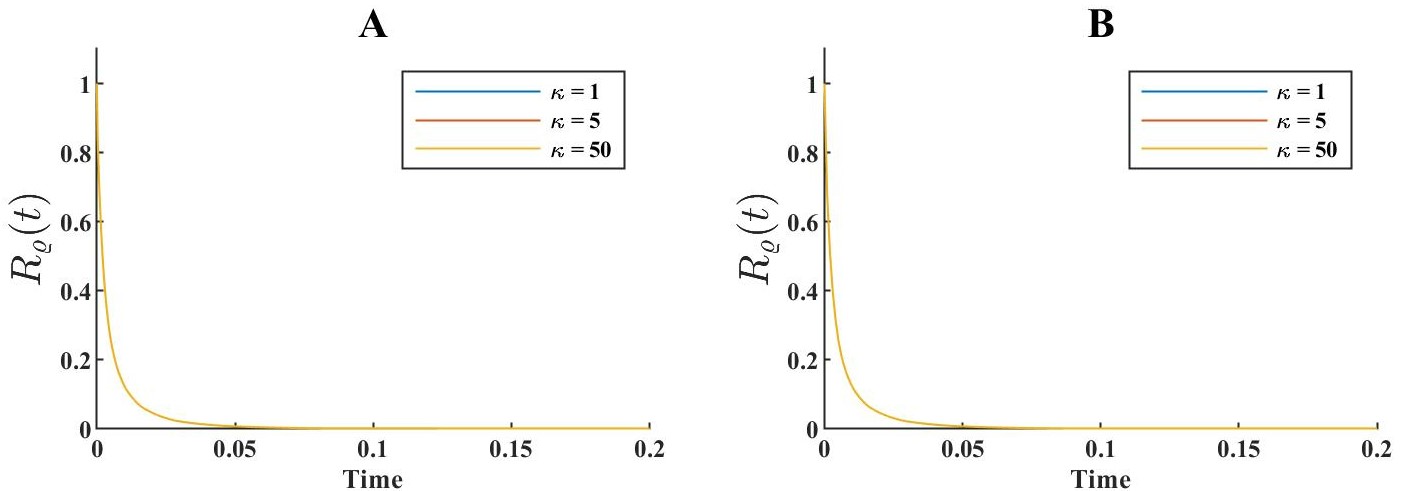}\\
\vspace{-1em}
\caption{Case B1. The evolution of $R_{\varrho}(t)$ with $\brho^z = \bh\delta(z-L(\bar h))$ for different $\kappa$. A: The steady state $(\frac{h_0}{2}\delta(z-L(\tfrac{h_0}{2})),\frac{h_0}{2},0)$ with the initial date \eqref{numerical_initial_1}. B: The steady state $(h_0\delta(z-L(h_0)), h_0, 0)$ with the initial data \eqref{numerical_initial_2}. The values of $\kappa$ have little influence on $R_{\varrho}(t)$. Both steady states are stable. }
\label{fig7}
\end{figure}
\begin{figure}[htbp!]
\centering
\vspace{-1em}
\includegraphics [width=10cm]{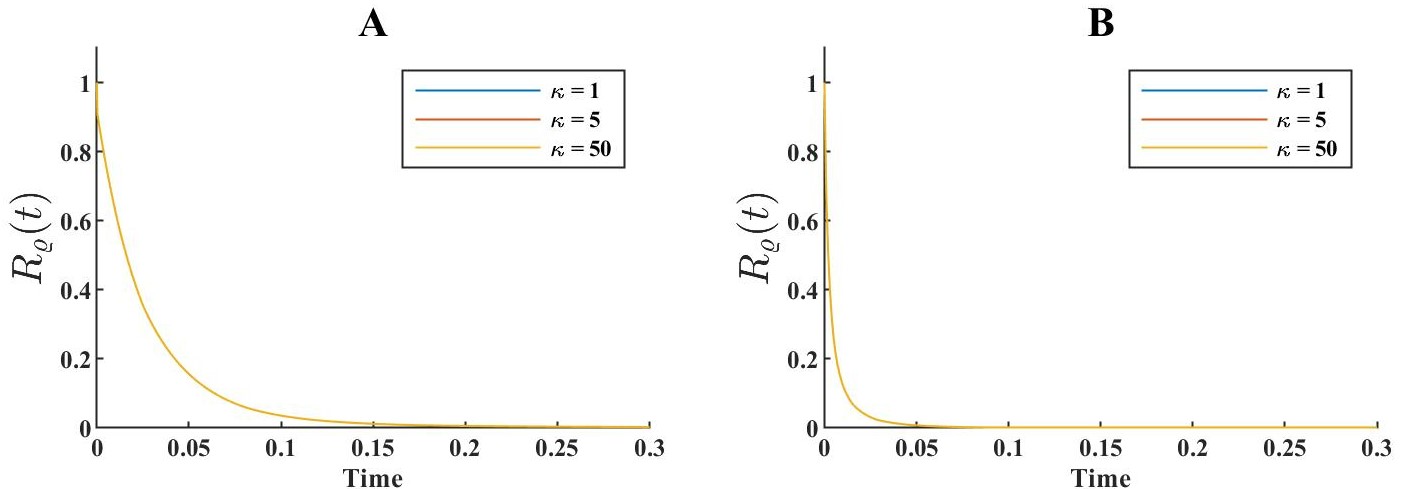}\\
\vspace{-1em}
\caption{Case B1. The evolut.ion of $R_{\varrho}(t)$ when $\brho^z$ is continuous  A) The steady state $(\frac{h_0}{2Z_w},\frac{h_0}{2},0)$ with the initial date \eqref{numerical_initial_4}. B) The steady state $(\frac{h_0}{Z_w},h_0,0)$ with the initial data \eqref{numerical_initial_5}. These two steady states both are stable.}
\label{fig8}
\end{figure}

When considering the state $(0,0,\bar n )$, we still choose the initial data \eqref{numerical_initial_3}. \Cref{fig9} shows that model \eqref{sys:kinetic}  with $k_V = r n(\mathbf{x},t) $ is unstable around the steady state $(0,0,0.5)$.

\begin{figure}[htbp!]
\centering
\vspace{-0.5em}
\includegraphics [width=10cm]{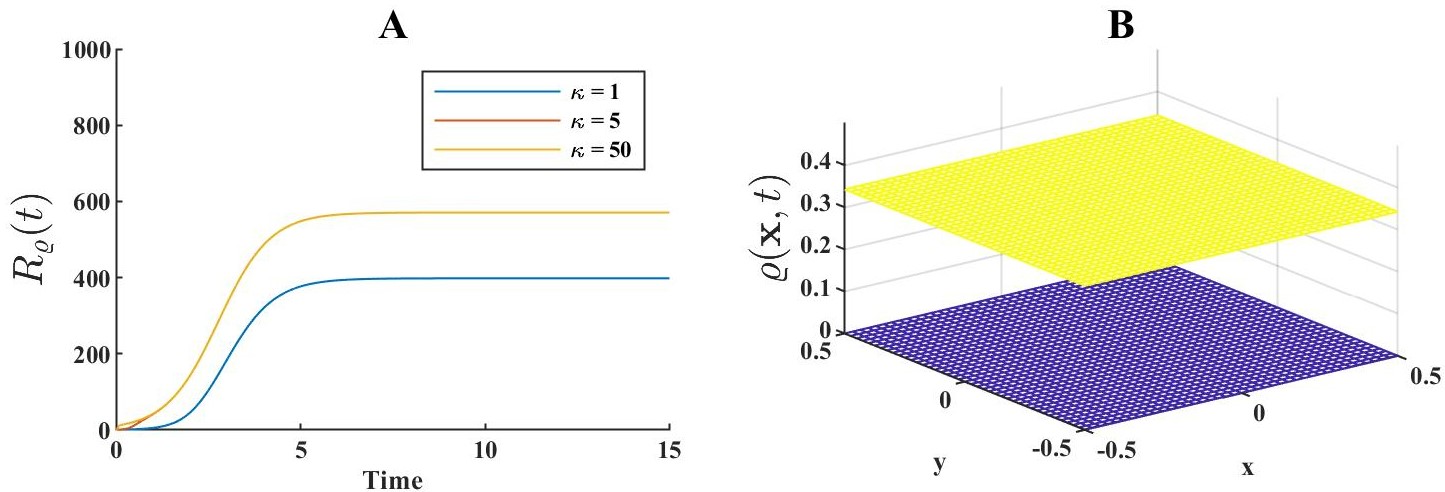}\\
\vspace{-1em}
\caption{Case B2. Instability with $k_V = r n(\mathbf{x},t)$ at $(0,0,0.5)$. A) The evolution of $R_{\varrho}(t)$  in the state $(0,0,\bar n)$ with the initial data \eqref{numerical_initial_3}. B: The state of $\varrho(\mathbf{x},t)$ at $t = 15$ hours (yellow) and the blue one is $\bar\varrho = 0$. Here we choose different $\kappa$. Note that the larger $\kappa$, the faster $R_{\varrho}(t)$ grows near the origin, but values of $\kappa$ have no influence on $\varrho(\mathbf{x},15)$.}
\label{fig9}
\end{figure}

\subsection{Interesting patterns}\label{subsec4.6}
\,\,In this subsection, we present some interesting spatial patterns. 

\textbf{Effect of initial spatial distribution.}
\,\,We show some numerical simulations of both PBDM and ADM with different initial data, which illustrate that the distribution of $\rho^{z,0}$ can influence the pattern formation of $\varrho(\mathbf{x},t)$, such as circular sector pattern. 

We choose
\begin{equation}\label{a33}
    \begin{aligned}
    L_x = L_y = 60,\,\, Z_w = 1.23,\,\, \Delta x = \Delta y = 0.1,\,\, \Delta z = 0.03,\,\, \Delta t = 0.0025,\,\,\kappa = 1.
    \end{aligned}
\end{equation} The parameters in PBDM are the same as in \eqref{para:1} and we take 
\begin{equation}\label{Dz_L(h)}
    \begin{aligned}
    D(z) = \frac{1}{2}\left(\frac{z}{Z_w}\right)^{30} + 0.01,\quad L(h) = Z_w\Big(0.5 - 0.5\tanh\big(1000(h-h_0)\big)\Big).
    \end{aligned}
\end{equation}
Note that $D(z)$ is a monotonically increasing non-negative function of $z$. Unless otherwise specified, we use these step sizes and parameters in the remaining simulations. The following three initial conditions are considered:
\begin{subequations}\label{s51}
\begin{align}
    \text{\textbf{Case 1}:}\,\,\, \rho^{z,0} &= \frac{1}{8\pi}\{\exp{[-\frac{1}{2}(\frac{x^2}{16} + y^2)]} + \exp{[-\frac{1}{2}(x^2 + \frac{y^2}{16})]}\}\mathbf{1}_{\{z = Z_w\}}\mathbf{1}_{\mathbf{D}},
 \label{a35.1}\\
     \text{\textbf{Case 2}:}\,\,\,
    \rho^{z,0} &= \frac{1}{2\pi}\mathbf{1}_{\{z = Z_w\}}\mathbf{1}_{\{(x,y) \in [-5,5] \times [-1,1] \cup  [-1,1] \times [-5,5]\}},
 \label{a35.2}\\
     \text{\textbf{Case 3}:}\,\,\,
    \rho^{z,0} &= \frac{1}{2\pi}\mathbf{1}_{\{z = Z_w\}}\mathbf{1}_{\{(x,y) \in [-10,10] \times [-1,1] \cup  [-1,1] \times [-10,10]\}},
 \label{a35.3}
\end{align}
\end{subequations}
with $h|_{t = 0} = 0$, $n|_{t = 0} = 1$, where $\mathbf{D}:=\{(x,y)|x\in [-10,10],\, y\in [-10,10]\}$. Compared with \eqref{a35.1} and \eqref{a35.2}, the spatial distribution of $\rho^{z,0}$ in \eqref{a35.3} is more non-uniform (see \Cref{fig10}).

\begin{figure}[htbp!]
\centering
\vspace{-0.3cm}
\includegraphics [width=11cm]{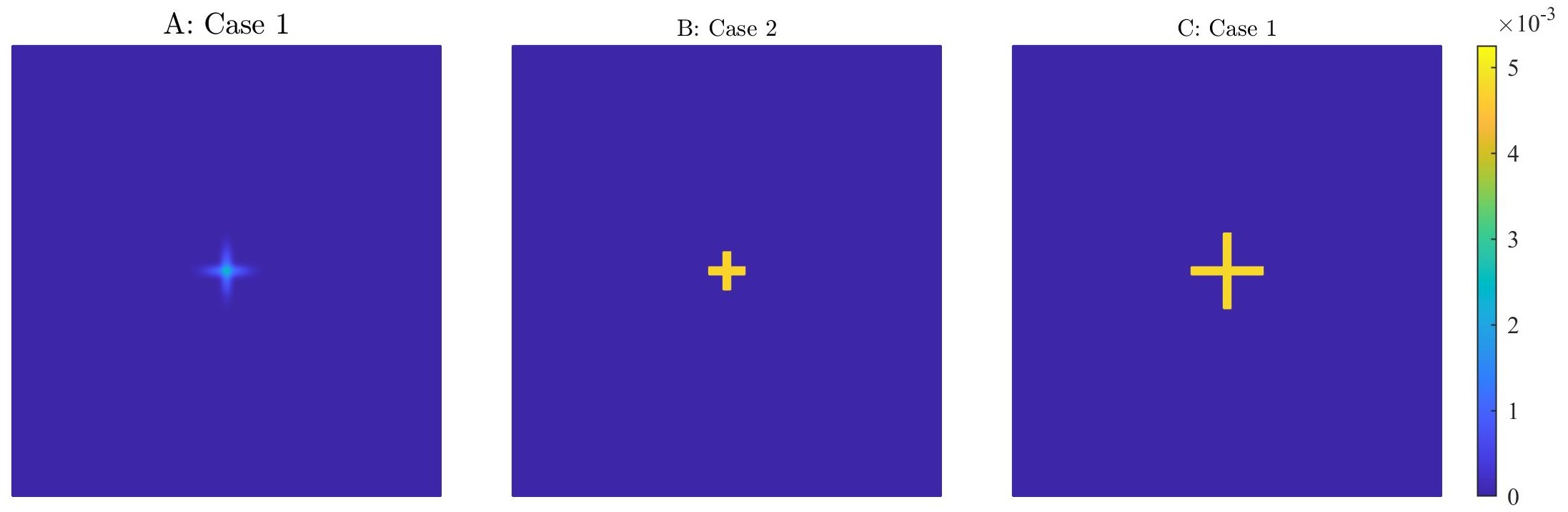}\\
\vspace{-0.1cm}
\caption{The top views of initial data $\varrho(\mathbf{x},0)$ in \eqref{s51}.}
\label{fig10}
\end{figure}

\Cref{fig11} and \Cref{fig11_2} plots respectively the total density $\varrho(\mathbf{x},t)$ in \eqref{sys:kinetic} and \eqref{sys:limit} at $t = 20$h with three different initial data \eqref{a35.1}-\eqref{a35.3}. The circular patterns with alternating high and low cell densities can be observed in all these cases. The total density of cells is low initially, and cells diffuse freely. As time goes on, the density of cells increases, the AHL concentration exceeds the threshold $h_0$, and then internal state $z$ decreases. As more and more cells move into these regions and get trapped, the high $\varrho(\mathbf{x},t)$-density circular patterns develops. 
\begin{figure}[htbp!]
\centering
\vspace{-0.25cm}
\includegraphics [width=11cm]{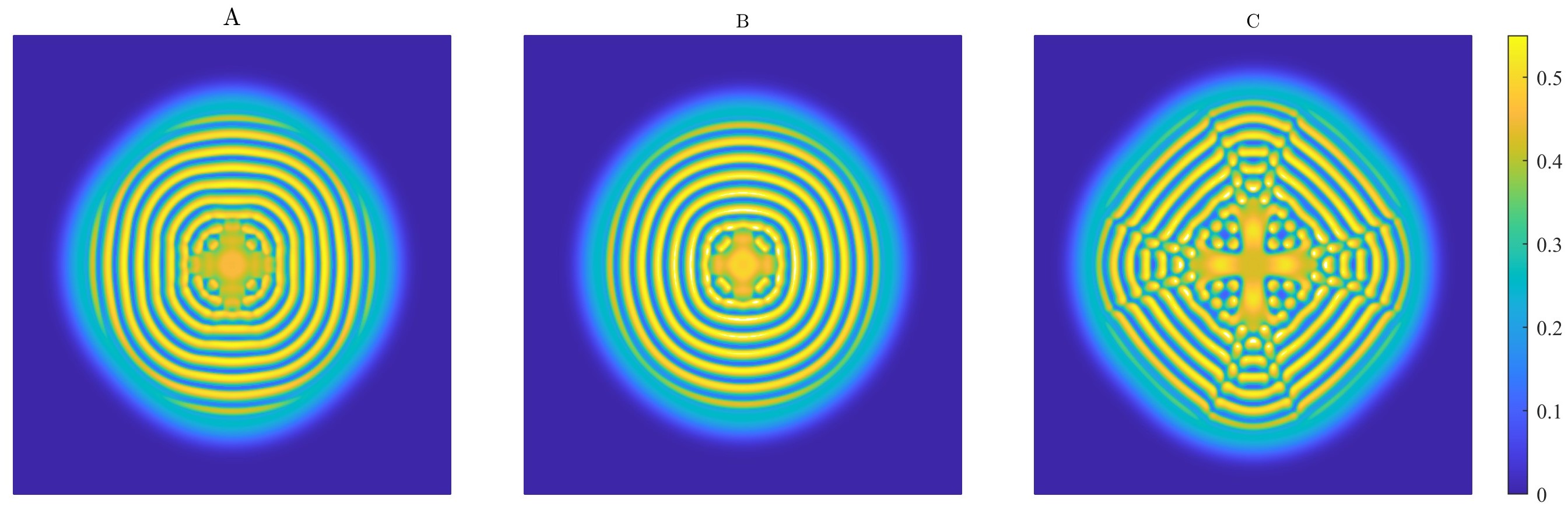}\\
\vspace{-0.1cm}
\caption{The top views of the evolution at $t=20h$ for $\varrho(\mathbf{x},t)$ in \eqref{sys:kinetic} with different initial data and $\kappa = 1$. A: \eqref{a35.1}. B: \eqref{a35.2}. C: \eqref{a35.3}.}
\label{fig11}
\end{figure}

\begin{figure}[htbp!]
\centering
\vspace{-0.25cm}
\includegraphics [width=11cm]{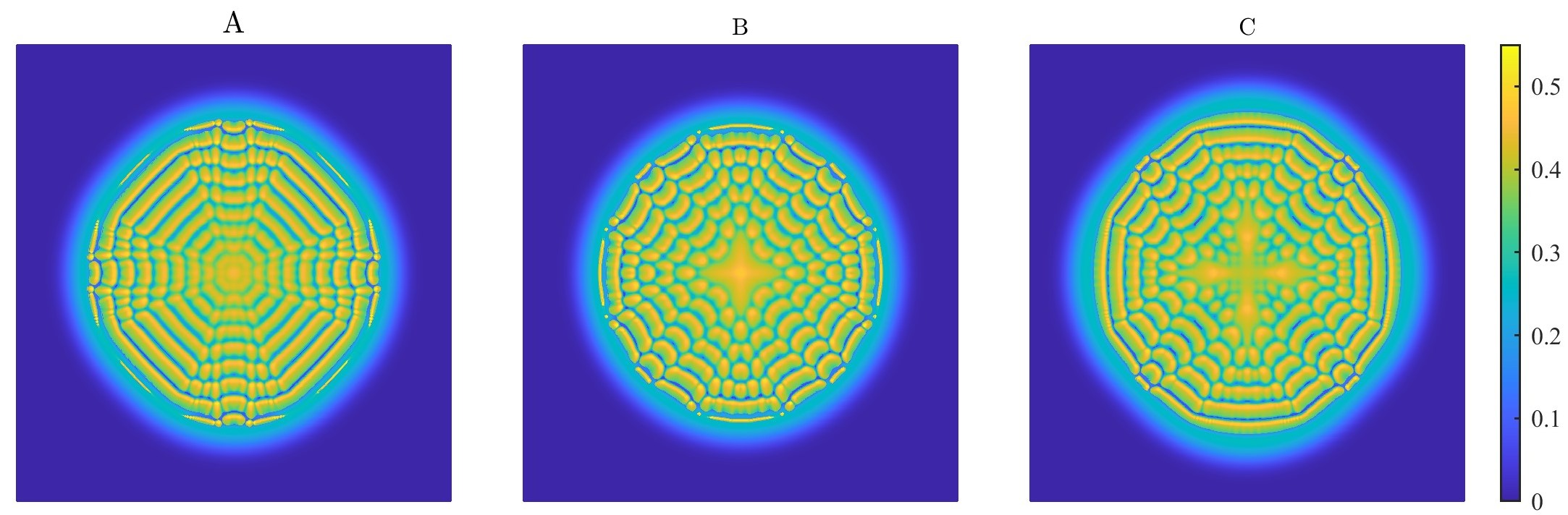}\\
\vspace{-0.1cm}
\caption{The top views of the evolution at $t=20h$ for $\varrho(\mathbf{x},t)$ in \eqref{sys:limit} with different initial data. A: \eqref{a35.1}. B: \eqref{a35.2}. C: \eqref{a35.3}.}
\label{fig11_2}
\end{figure}

\textbf{Interaction of two rings.}\,\,  It is observed that stripes with alternating high and low cell densities establish sequentially behind a radially propagating colony front, one interesting question is: what happens if two stripes interact with each other? It is also a problem of coexistence. In order to show this process, we set
$L_x = 60$, $L_y = 20$,
and other parameters remain unchanged. The initial data is chosen as
\begin{equation}\label{numerical_initial_6}
    \begin{aligned}
    \rho^z(x,y,z,0) = \frac {1}{8\pi}\left\{e^{-0.125\left[x^2+(y - L_y)^2\right]}\mathbf{1}_{\{y\ge 0\}}
    + e^{-0.125\left[x^2+(y + L_y)^2\right]}\mathbf{1}_{\{y < 0\}}\right\}\mathbf{1}_{\{z = Z_w\}},\\
    \end{aligned}
\end{equation}
and $h^0 = 0$, $n^0 = 1$. From the initial data we can see that the cells are near $(0,-L_y)$ and $(0,L_y)$ initially, with the internal steady state $z = Z_w$. The boundary condition with respect to $x$ is the same as \eqref{dis_xy_boundary}. Because the size of $L_y$ is smaller than $L_x$, we use periodic boundary conditions for $y$ such that
\[\rho^z(x,-L_y,z,t) = \rho^z(x,L_y,z,t),\,\, h(x,-L_y,t) = h(x,L_y,t),\,\, n(x,-L_y,t) = n(x,L_y,t),\]
The time evolutions of $\varrho(\mathbf{x},t)$ for both PBDM and ADM are shown in \Cref{fig14} and \Cref{fig14_2}.




\begin{figure}[htbp!]
\centering
\vspace{-0.25cm}
\includegraphics [width=12cm]{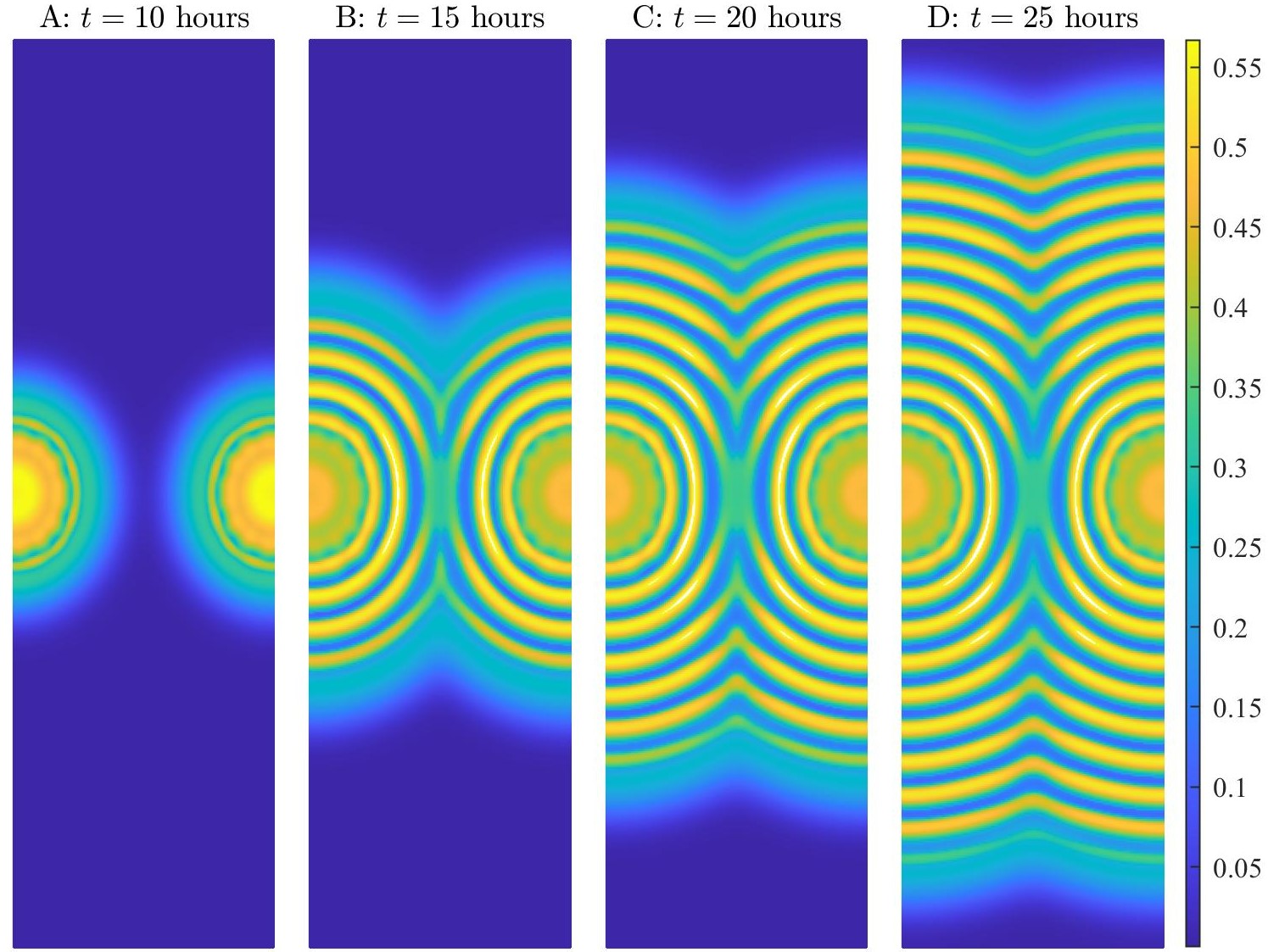}\\
\vspace{-0.1cm}
\caption{The intersection of two populations in \eqref{sys:kinetic} with the initial data \eqref{numerical_initial_6}, the diffusion coefficient $D(z)$ defined in \eqref{Dz_L(h)} and $\kappa = 1$ at different times.}
\label{fig14}
\end{figure}

\begin{figure}[htbp!]
\centering
\vspace{-0.25cm}
\includegraphics [width=12cm]{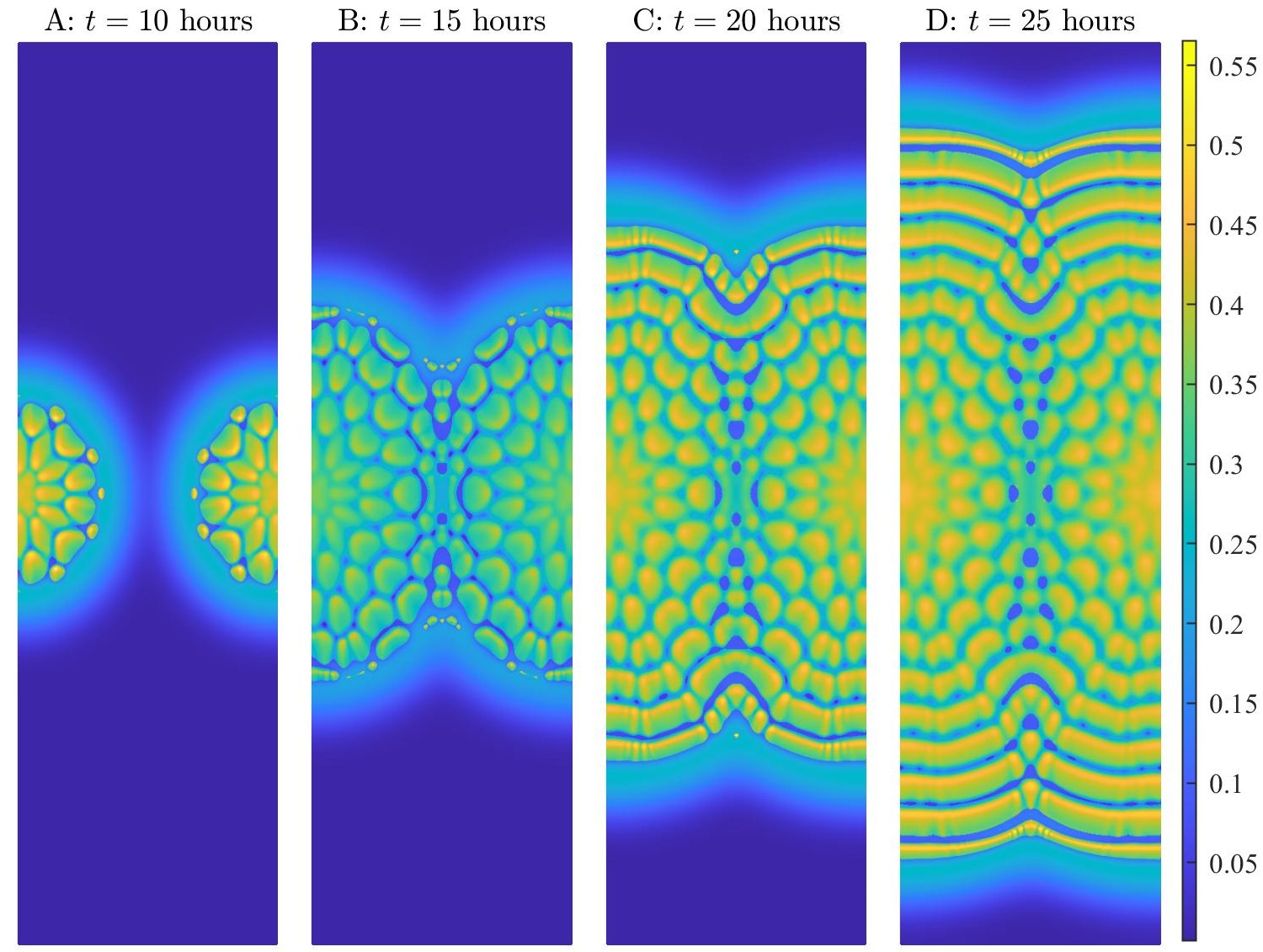}\\
\vspace{-0.1cm}
\caption{The intersection of two populations in \eqref{sys:limit} with the initial data \eqref{numerical_initial_6}, and the diffusion coefficient $D(z)$ defined in \eqref{Dz_L(h)} at different times.}
\label{fig14_2}
\end{figure}

\section{Conclusion}\label{sec5}
The reaction-diffusion system is now widely used to model the motion of bacteria populations, especially in the study of Turing patterns and related issues such as stability and instability. In this work, we investigate the linear stability of a diffusion model with the internal state that characterizes the dynamics of the engineered $Escherichia$ $coli$ populations, which is a new kinetic model derived from the moment closure methods and asymptotic analysis in \cite{Xue2018The}.

At the beginning of this program, we first establish the linear stability analysis of the kinetic model. 
The classical stability analysis fails for our system \eqref{sys:kinetic} due to the delta function distribution. We reformulate the equation according to its structure and then analyze the stability of the system. By introducing small perturbations of the density and concentration in the system, the linearized equations are transformed into a generalized eigenvalue problem.  It is found that the stability of model \eqref{sys:kinetic} depends on the  response speed $\kappa$. Considering $\kappa\to\infty$, we derives an anisotropic diffusion model \eqref{sys:limit} and investigate the similarity and difference between these two models in
terms of linear stability analysis and pattern formation. Then in future work, it could be natural to justify the rigorous stability analysis of the model. At the same time, there are many more interesting topics worthy of further study. 

The second aspect concerns an asymptotic preserving scheme of the \eqref{sys:kinetic}. When the CheZ turnover rate approaches infinity, the pathway-based diffusion model \eqref{sys:kinetic} $formally$ converges to the macroscopic model \eqref{sys:limit} and the Dirac delta form of $\rho^z(\mathbf{x},z,t)$ brings some difficulties to the design of the numerical scheme of model \eqref{sys:kinetic}. We use the time splitting method to solve the $\rho^z$-equation and we design a uniform numerical scheme to capture the Dirac delta that is consistent with a stable discretization of ADM. Due to the difficulties in analysis, we did not justify the limit with $\kappa\to\infty$ from the model \eqref{sys:kinetic} to the limit model \eqref{sys:limit} and left the harder rigorous analysis to further study. Besides, many researchers discussed the hydrodynamic limits and the trend toward equilibrium, see \cite{Gosse2015A, Vauchelet2021Numerical} for instance.

The last important issue is with the numerical simulation viewpoint, which is very useful in studying the patterns, stability, and instability problems. To better understand the stability of system around the steady states, numerical simulations confirm the above theoretical results based on an asymptotic preserving scheme and illustrate that the pattern formation of $\varrho(\mathbf{x},t)$ is just a sufficient condition for the instability of model \eqref{sys:kinetic}. We also show some interesting patterns of cell density $\varrho(\mathbf{x},t)$ to illustrate the fact that the spatial distribution of $\rho^{z,0}$ might lead to the formation of patterns, such as circular sector patterns, the behavior of two populations in contact with each other. Moreover, multiscale models and synthetic biology \cite{Khalil2010Synthetic,Mukherji2009Synthetic, Wang2017A, Xue2009Multiscale} raise more research topics in biological models and mathematical analysis.

\section*{Appendix}
\section{Linear stability analysis of the limit model (\ref{sys:limit}) }\label{SM1}We first determine the non-trivial steady state $(\bar\varrho, \bar h, \bar n)$. Plugging the constant states $(\bar\varrho, \bar h, \bar n)$ into model \eqref{sys:limit}, we have
\begin{equation}\label{limit_constraint}
    \begin{aligned}
    &\Delta _{\mathbf{x}}\bar\varrho = 0,\quad \Delta _{\mathbf{x}}\bar h = 0,\Delta _{\mathbf{x}}\bar n = 0,\\
    &r\bar n\bar\varrho = 0,\quad \alpha \bar\varrho - \beta \bar h = 0,\quad \gamma \bar\varrho\bar n = 0.
    \end{aligned}
\end{equation}
Then we linearize the model \eqref{sys:limit} around the steady state $(\bar\varrho, \bar h, \bar n)$, which satisfies the constraint \eqref{limit_constraint}. Define
\begin{equation}\label{limit_perturbation}
    \varrho = \bar\varrho + \delta \varrho,\quad h = \bar h + \delta h,\quad n = \bar n + \delta n,
\end{equation}
where the perturbation $|\delta \bm{\phi} | \ll 1$, and $\bm{\phi} = (\varrho, h,n)^T $. We can derive the corresponding characteristic matrix and determine the stability according to the sign of real parts of eigenvalues. We assume the perturbation $(\delta\varrho, \delta h, \delta n)$ can be represented by a harmonic wave as follows:
\begin{equation*}
\begin{aligned}
    \left(
  \begin{array}{l}
  \delta \varrho\\
  \delta h\\
  \delta n
  \end{array}
  \right)= \sum_{\mathbf{k}}
      \left(
  \begin{array}{l}
           C_1^{\mathbf{k}}\\
           C_2^{\mathbf{k}}\\
           C_3^{\mathbf{k}}
  \end{array}
  \right)e^{\lambda t + i k_1 x + i k_2 y},
\end{aligned}
\end{equation*}
where $\mathbf{k} = (k_1,k_2)^T \in \mathbb{R}^2$ and $k_1$, $k_2$ are the corresponding frequencies. We also assume that the initial condition is given by
  \begin{align*}
     (\rho^{z,0}(x,z), h^0(x), n^0(x)) = (\brho^{z,0} + \delta \rho^{z,0},\, \bh^0 + \delta h,\, \bn^0 + \delta n)\,.
  \end{align*}
 
\subsection{Linearization}\label{SM1.1} 
Substituting the constraint \eqref{limit_constraint} and the perturbation form \eqref{limit_perturbation} into model \eqref{sys:limit}, we obtain the following perturbation system:
\begin{equation}\label{limit_sys:perturbation}
    \begin{aligned}
    \begin{cases}
        \partial_t \delta\varrho &= \Delta_{\mathbf{x}}\left(D(L(\bar h))\delta\varrho\right) + \Delta_{\mathbf{x}}\left(D_h(L(\bar h))\bar\varrho\delta h\right) + r\bar n \delta\varrho + r\bar\varrho\delta n,\\
        \partial_t \delta h &= D_h\Delta_{\mathbf{x}}\delta h + \alpha\delta\varrho - \beta\delta h,\\
        \partial_t \delta n &= D_n\Delta_{\mathbf{x}}\delta n - \gamma\bar\varrho\delta n - \gamma\bar n\delta \varrho,
    \end{cases}
    \end{aligned}
\end{equation}
where $D_h(\bar h) := \partial_h D(L(\bar h))$.

\subsection{The stability analysis}\label{SM1.2} Substituting the perturbation into the perturbation system \eqref{limit_sys:perturbation} yields the characteristic matrix as follows:
\begin{equation*}
\begin{aligned}
    \left(
  \begin{array}{ccc}
          -KD(L(\bar h)) + r\bar n - \lambda & -KD_h(L(\bar h))\bar\varrho &r\bar\varrho  \\
          \alpha& -KD_h - \beta - \lambda &0\\
          -\gamma \bar n& 0 &-KD_n - \gamma\bar\varrho - \lambda\\
  \end{array}
  \right),
\end{aligned}
\end{equation*}
where $K = k_1^2 + k_2^2$. We focus on the two types of non-zero steady states $(0, 0, \bar n)$ and $(\bar\varrho, \bar h, 0)$. We split the state $\bar h$ into three parts: $\bar h = 0$, $\bar h \ne h_0$ ($\bar h > 0$), $\bar h = h_0$.
\begin{itemize}
    \item \textbf{The steady state $(0,0,\bar n)$ with $\bar n > 0$}. The corresponding eigenvalues are
    \begin{equation*}
        \begin{aligned}
            \lambda_5^1 = -K D(L(0)) + r\bar n,\quad \lambda_5^2 = -K D_h - \beta,\quad \lambda_5^3 = -K D_n\,.
        \end{aligned}
    \end{equation*}
    Observe that the real part of $\lambda_5^1$ are positive if $K$ is sufficiently small. Then the model \eqref{sys:limit} is unstable at the steady state $(0,0,\bar n)$.
    \bigskip
    \item \textbf{The steady state $(\bar\varrho,\bar h, 0)$ with $\bar h \ne h_0$ }. It is noted that as $\bar h$ dose not equal $h_0$, the derivative $D_h(L(\bar h))$ equals $0$ if $\mu$ is sufficiently large. The corresponding eigenvalues are
    \begin{equation*}
        \begin{aligned}
            \lambda_6^1 = -K D(L(\bar h)),\quad \lambda_6^2 = -K D_h - \beta,\quad \lambda_6^3 = -K D_n - \gamma\bar\varrho\,.
        \end{aligned}
    \end{equation*}
    Note that $\lambda_6^i\, (i = 1, 2, 3)$ are negative real values, which means that the model \eqref{sys:limit} is stable at the steady state $(\bar \varrho,\bar h,0)$ as $\bar h$ is away from $h_0$.
    \bigskip
    \item \textbf{The steady state $(\bar\varrho,h_0, 0)$ with $\bar \varrho = \frac{\alpha}{\beta}h_0$} . The characteristic equation is
    \begin{equation*}
       \left[(\lambda + KD(L(h_0)))(\lambda + KD_h + \beta) + \alpha KD_h(L(h_0))\bar\varrho\right](\lambda + KD_n + \gamma\bar\varrho) = 0.
    \end{equation*}
    We can obtain $\lambda_7^3 = -KD_n -\gamma \bar\varrho$ and the eigenvalues $\lambda_7^1$, $\lambda_7^2$ satisfy
        \begin{equation}\label{limit_characteristic_eqn}
      \lambda^2 + (a+b)\lambda + ab + c = 0,
    \end{equation}
    with
    \[a = KD(L(h_0)),\quad b = KD_h + \beta,\quad c = \alpha KD_h(L(h_0))\bar\varrho.\]
    From the definition of $D(L(h))$, we have
    \[\lim_{\mu\to+\infty}(ab + c) = \lim_{\mu\to+\infty} c = \lim_{\mu\to+\infty}D_{h}(L(h_0)) =  -\infty,\]
    for some fixed $K$. By Vieta theorem, there exist a positive root and a negative root for the equation \eqref{limit_characteristic_eqn} for any fixed $K$. Thus the model \eqref{sys:limit} is unstable at the steady state $(\bar\varrho,\bar h, 0)$.
\end{itemize}

\bibliographystyle{alpha}
\bibliography{sample}

\end{document}